\numberwithin{equation}{section}
\newtheorem{theorem}{Theorem}[section]
\newtheorem{lemma}[theorem]{Lemma}
\newtheorem{proposition}[theorem]{Proposition}
\newtheorem{corollary}[theorem]{Corollary}
\theoremstyle{definition}
\newtheorem{example}[theorem]{Example}
\newtheorem{remark}[theorem]{Remark}
\newtheorem{definition}[theorem]{Definition}
\newcommand{\be}{\begin{equation}}
	\newcommand{\ee}{\end{equation}}
\newcommand{\bes}{\begin{equation*}}
	\newcommand{\ees}{\end{equation*}}
\newcommand{\cB}{\mathcal{B}}
\newcommand{\cC}{\mathcal{C}}
\newcommand{\cE}{\mathcal{E}}
\newcommand{\cH}{\mathcal{H}}
\newcommand{\cI}{\mathcal{I}}
\newcommand{\cJ}{\mathcal{J}}
\newcommand{\cL}{\mathcal{L}}
\newcommand{\cR}{\mathcal{R}}
\newcommand{\cS}{\mathcal{S}}
\newcommand{\cU}{\mathcal{U}}
\newcommand{\cV}{\mathcal{V}}
\newcommand{\cX}{\mathcal{X}}
\newcommand{\cY}{\mathcal{Y}}
\newcommand{\bB}{\mathbb{B}}
\newcommand{\bC}{\mathbb{C}}
\newcommand{\bD}{\mathbb{D}}
\newcommand{\bF}{\mathbb{F}}
\newcommand{\bM}{\mathbb{M}}
\newcommand{\bN}{\mathbb{N}}
\newcommand{\lip}{\langle}
\newcommand{\rip}{\rangle}
\newcommand{\ip}[1]{\lip #1 \rip}
\newcommand{\ol}{\overline}
\newcommand{\Rep}{\operatorname{Rep}}
\DeclareMathOperator*{\sotlim}{\textsc{sot}--lim}
\newcommand{\Aut}{\operatorname{Aut}}
\newcommand{\diag}{\operatorname{diag}}
\newcommand{\id}{\operatorname{id}}
\newcommand{\spn}{\operatorname{span}}
\newcommand{\fB}{{\mathfrak{B}}}
\newcommand{\fC}{{\mathfrak{C}}}
\newcommand{\fD}{{\mathfrak{D}}}
\newcommand{\fV}{{\mathfrak{V}}}
\newcommand{\fW}{{\mathfrak{W}}}
\newcommand{\fX}{{\mathfrak{X}}}
\newcommand{\foral}{\text{ for all }}
\newcommand{\qand}{\quad\text{and}\quad}
\newcommand{\AND}{\text{ and }}
\begin{document}


\title{\textbf{Weak-* and completely isometric structure of noncommutative function algebras}}

\author{Jeet Sampat}
\address{Department of Mathematics\\
Technion --- Israel Institute of Technology\\
Haifa, Israel}
\email{sampatjeet@campus.technion.ac.il}
\author{Orr Moshe Shalit}
\address{Department of Mathematics\\
Technion --- Israel Institute of Technology\\
Haifa, Israel}
\email{oshalit@technion.ac.il}

\subjclass[2010]{46L52, 47B32, 47L80}

\thanks{The second author was partially supported by Israel Science Foundation Grant no. 431/20.}

\begin{abstract}
We study operator algebraic and function theoretic aspects of algebras of bounded nc functions on subvarieties of the nc domain determined by all levels of the unit ball of an operator space (nc operator balls). 
Our main result is the following classification theorem: under very mild assumptions on the varieties, two such algebras $H^\infty(\fV)$ and $H^\infty(\fW)$ are completely isometrically and weak-* isomorphic if and only if there is a nc biholomorphism between the varieties. 
For matrix spanning homogeneous varieties in injective operator balls, we further sharpen this equivalence, showing that there exists a linear isomorphism between the respective balls that maps one variety onto the other; in general, we show, the homogeneity condition cannot be dropped. 
We highlight some difficulties and open problems, contrasting with the well studied case of row ball. 
\end{abstract}

\maketitle


\section{Introduction} \label{sec:intro}

The theory of noncommutative (nc) functions was born in the 1970s as part of a natural conceptual generalization of the functional calculus for commuting tuples of operators \cite{Taylor-frame,Taylor-ncfunc}. 
Around the turn of the millennium the theory was rediscovered and revived with applications in operator theory, systems/control theory and free probability in mind (see, e.g., \cite{ball2006conservative, helton2007linear, Popescu-funcI,Voiculescu-quest1}), and today is a well established area \cite{agler2020operator,ball2021noncommutative,KVV14} flourishing into multiple research directions \cite{AM16, AglMcCYng, AHKM, BMV16, BMV18, dmitrieva2024bundles, evert2017circular, JMS-bso, JMS21, pirkovskii2019holomorphic, PV18}. 

This paper continues our ongoing study of the isomorphism problem for operator algebras of bounded nc functions on subvarieties of nc operator balls \cite{SSS18,SSS20,SampatShalit25}, which has grown out of earlier endeavors to classify operator algebras in terms of geometric invariants \cite{DRS11,DRS15,KS19,SS09}. 
Besides being a compelling class of concrete operator algebras which is interesting to study in its own right, the classification program for these algebras has driven progress in purely nc function theoretic questions.
For example, maximal principle, extension theorems and clarification of the similarity envelope in the above cited papers, or applications to iteration theory in \cite{BS+,Sha18}.
In the following paragraphs we shall describe what we achieve in this paper and how this differs from what was done previously. The reader who is not fluent in the parlance of nc functions can refer to the next section where the terms and notations we used are leisurely explained.

The guiding problem of this paper is the classification of the algebras of $H^\infty(\fV)$ of bounded nc functions on a subvariety $\fV$ inside a nc operator ball $\bD_Q$ (see Definition \ref{def:nc.op.ball}). 
Inspired by the main results of \cite{SSS18}, one may guess that given two varieties $\fV_i \subseteq \bD_{Q_i}$, ($i=1,2$), the algebras $H^\infty(\fV_1)$ and $H^\infty(\fV_2)$ are completely isometrically isomorphic if and only if there is a nc biholomorphism between $\fV_2$ and $\fV_1$. 
However, since we are working in the setting of general nc operator balls, rather than in the thoroughly studied and well understood setting of the row ball, and, moreover, since we are not restricting attention to homogeneous varieties, there are several challenges we shall need to overcome. 

The main goal of Section \ref{section:canon.w-*.top.H^infty(D_Q)} is to show that the algebras $H^\infty(\fV)$ of bounded nc function are dual spaces, supplied with a natural weak-* topology which is the unique predual for which point evaluations are continuous. 
This is significant, because a key tool used in earlier works on algebras of bounded nc functions in the row ball was that these algebras come with a faithful representation on a natural nc RKHS, and so they inherited a weak-* topology. In the case of general nc operator balls it was shown in \cite[Theorem 2.4]{SampatShalit25} that there is no such natural representation. 

The utility of realizing $H^\infty(\fV)$ as a dual space is discussed in Section \ref{subsec:fin.dim.reps}, where it is explained that the points in a nc variety $\fV$ correspond to weak-* representations, through the association of every $X \in \fV$ with the point evaluation 
\[
\Phi_X \colon f \mapsto f(X). 
\]
These are not all weak-* continuous representations, because some $X \in \partial \fV$ do give rise to weak-* continuous point evaluations. Moreover, it is not clear whether over a point $X \in \fV$ there might be fibered representations that are not weak-* continuous (see Remark \ref{rem:pure.tuples}). 
Given these two difficulties, the variety $\fV$ becomes a more interesting invariant of the algebras $H^\infty(\fV)$ than it would have been otherwise.

In the exploratory Section \ref{sec:rep_ball}, we share our attempts to overcome the second difficulty mentioned above and highlight the difficulties encountered. We exhibit a new noncommutative phenomenon: the remainder term in the nc Taylor-Taylor formula of a {\em bounded nc function} might be unbounded on the ball. 
This obstruction --- which arises already at the remainder of order $1$ and at the scalar level --- is the reason why we need to restrict our classification scheme to weak-* continuous isomorphisms. It leaves us with many interesting questions and research directions. We close Section \ref{sec:rep_ball} with some partial positive results. 

The main results of this paper are obtained in the second and third parts of Section \ref{sec:class.alg.H(V)}. 
In Theorem \ref{thm:gen.class.thm} we show, under the assumption that $\fV_2$ contains a scalar point, that there exists a weak-* continuous completely isometric isomorphism $\varphi \colon H^\infty(\fV_1) \to H^\infty(\fV_2)$ if and only if there is a nc biholomorphism $F \colon \fV_2 \to \fV_1$; further, such an isomorphism is implemented as $\varphi(f) = f \circ F$. 
To show that $F$ maps $\fV_2$ into $\fV_1$ (and does not send interior points to the boundary) we first prove a function theoretic {\em boundary value principle} (Theorem \ref{thm:bdy.val.prin}), which is a dichotomy saying that the range of a nc map from a subvariety of a nc operator ball into another nc operator ball is either contained entirely in the interior of the ball or contained entirely in the boundary; this refines an earlier variant from \cite{SampatShalit25}. 

Our sharpest classification result is obtained for matrix spanning homogeneous varieties inside injective nc operator balls (see Section \ref{subsec:inj.nc.op.balls} for definitions). 
We state it here for convenience: 

\begin{theorem}[Theorem \ref{thm:sharpest.classification.thm}]
For $i = 1,2$, let $\fV_i \subseteq \bD_{Q_i}$ be matrix-spanning homogeneous subvarieties of some injective nc operator balls. Then, the following are equivalent.

\begin{enumerate}[leftmargin=*]
\item There is a weak-* continuous completely isometric isomorphism $\varphi \colon H^\infty(\fV_1) \to H^\infty(\fV_2)$.

\item There is a nc biholomorphism of $\fV_2$ onto $\fV_1$.

\item There is a nc biholomorphism $F \colon \bD_{Q_2} \to \bD_{Q_1}$ such that $F(\fV_2) = \fV_1$.

\item There is a linear isomorphism $L \colon \bD_{Q_2} \to \bD_{Q_1}$ such that $L(\fV_2) = \fV_1$.

\item There is a completely isometric isomorphism $\widetilde{\varphi} \colon A(\fV_1) \to A(\fV_2)$.
\end{enumerate}
\end{theorem}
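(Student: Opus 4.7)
The plan is to argue around the cycle $(1) \Leftrightarrow (2) \Rightarrow (4) \Rightarrow (3) \Rightarrow (2)$ together with $(4) \Rightarrow (5) \Rightarrow (1)$, so that the whole chain of equivalences closes up. The implications $(4) \Rightarrow (3)$ and $(3) \Rightarrow (2)$ are trivial by restriction, and $(1) \Leftrightarrow (2)$ follows directly from Theorem \ref{thm:gen.class.thm}, since homogeneity of $\fV_2$ forces $0 \in \fV_2$, which is the required scalar point. The substantive implications remaining are thus $(2) \Rightarrow (4)$, $(4) \Rightarrow (5)$, and $(5) \Rightarrow (1)$.

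The central step, and the one I expect to present the main obstacle, is $(2) \Rightarrow (4)$: promoting a nc biholomorphism $F \colon \fV_2 \to \fV_1$ to a linear isomorphism of the ambient balls. My plan is a noncommutative Cartan-type rigidity argument. First, one uses the nc ball automorphisms of $\bD_{Q_1}$ --- whose richness is the principal manifestation of injectivity --- to normalize so that $F(0) = 0$. Next, for $r \in (0,1]$ consider the rescalings $F_r(X) := r^{-1} F(rX)$; homogeneity of $\fV_2$ makes each $F_r$ well-defined on $\fV_2$, and homogeneity of $\fV_1$ makes each take values back in $\fV_1$. Since $F$ is bounded and $F(0) = 0$, the family $\{F_r\}$ is locally uniformly bounded, so a subsequential limit $L := \lim_{r \to 0^+} F_r$ exists and is identified, via the nc Taylor--Taylor expansion, with the linear part $F'(0)$ of $F$. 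Matrix-spanning of $\fV_2$ then guarantees that $L$ extends uniquely to a linear map on the ambient coefficient space, and running the same argument with $F^{-1}$ produces an inverse. A further use of homogeneity and matrix-spanning (comparing ball norms on matrix-spanning subsets) then shows $L$ maps $\bD_{Q_2}$ onto $\bD_{Q_1}$ with $L(\fV_2) = \fV_1$, which is (4).

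The remaining implications are more routine. For $(4) \Rightarrow (5)$, given a linear isomorphism $L$ with $L(\fV_2) = \fV_1$, the composition $\widetilde{\varphi}(f) := f \circ L$ is immediately a completely isometric isomorphism $A(\fV_1) \to A(\fV_2)$, since $L$ is bi-Lipschitz on $\overline{\fV_2}$ and preserves the uniform topology on which $A(\fV_i)$ is built. For $(5) \Rightarrow (1)$, the plan is to extend $\widetilde{\varphi}$ from $A(\fV_1)$ to $H^\infty(\fV_1)$ via weak-* limits of dilates: for $f \in H^\infty(\fV_1)$ and $r \in (0,1)$ the dilate $f_r(X) := f(rX)$ lies in $A(\fV_1)$, with $\|f_r\|_\infty \le \|f\|_\infty$, and $f_r \to f$ in the weak-* topology from Section \ref{section:canon.w-*.top.H^infty(D_Q)} as $r \to 1^-$ (a Fej\'er/Abel-summation argument available for homogeneous varieties). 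Setting $\varphi(f) := \text{weak-*-}\lim_{r \to 1^-} \widetilde{\varphi}(f_r)$ then yields a weak-* continuous completely isometric extension of $\widetilde{\varphi}$ to $H^\infty(\fV_1) \to H^\infty(\fV_2)$, once one verifies that the limit exists and is independent of choices via a standard bidual/density argument.
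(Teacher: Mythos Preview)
Your cycle is organized sensibly and your handling of $(1)\Leftrightarrow(2)$ via Theorem~\ref{thm:gen.class.thm} matches the paper. However, the heart of your $(2)\Rightarrow(4)$ argument has a real gap, and it stems from a misidentification of the role of injectivity.

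You write that one first uses automorphisms of $\bD_{Q_1}$ to normalize so that $F(0)=0$, and you justify the existence of such automorphisms as ``the principal manifestation of injectivity.'' This is not how injectivity enters. Injectivity of $\bD_{Q_i}$ is used in \cite{SampatShalit25} (and in this paper) to guarantee an \emph{extension} property: any nc map from a relatively full subset into $\overline{\bD_{Q_i}}$ extends to the whole ambient ball. It is this extension theorem, not a transitive automorphism group, that drives the argument. More seriously, even if one grants the existence of an automorphism $\Psi$ of $\bD_{Q_1}$ with $\Psi(F(0))=0$, the composite $\Psi\circ F$ maps $\fV_2$ onto $\Psi(\fV_1)$, and $\Psi(\fV_1)$ is in general \emph{not} homogeneous. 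Your very next sentence then invokes ``homogeneity of $\fV_1$'' to conclude that $F_r(X)=r^{-1}F(rX)$ lands back in $\fV_1$; after the normalization this is no longer available, so the $F_r$ argument breaks down. You would need a separate argument that $F(0)=0$ holds \emph{before} any normalization, and you have not supplied one.

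The paper's route avoids this by citing \cite[Theorem~6.12]{SampatShalit25} for $(2)\Rightarrow(3)$ and \cite[Theorem~6.21]{SampatShalit25} for $(3)\Leftrightarrow(4)\Leftrightarrow(5)$. The underlying mechanism for $(2)\Rightarrow(3)$ is: use injectivity to extend both $F$ and $F^{-1}$ to maps $\tilde F\colon\bD_{Q_2}\to\bD_{Q_1}$ and $\tilde G\colon\bD_{Q_1}\to\bD_{Q_2}$; then $\tilde G\circ\tilde F$ fixes $0$ automatically (since $F(0)\in\fV_1$ and $G=F^{-1}$ on $\fV_1$), restricts to the identity on $\fV_2$, and by matrix-spanning plus a nc Cartan uniqueness argument one concludes $\tilde G\circ\tilde F=\id$. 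This gives $(3)$ without ever needing $F(0)=0$. Your $(5)\Rightarrow(1)$ via weak-$*$ limits of dilates is plausible but also unnecessarily indirect: once $(4)$ is in hand, composition with the linear $L$ is manifestly weak-$*$ continuous and completely isometric.
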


The point is that if two algebras of bounded nc functions on varieties are weak-* continuous and completely isometrically isomorphic, then not only are the varieties biholomorphic (as we know from Theorem \ref{thm:gen.class.thm}) --- but the entire balls must also be biholomorphic and, in fact, one can choose a linear biholomorphism. 
This means that the corresponding operator spaces must be completely isometric. 
In Example \ref{ex:cant.drop.homo} we show that the homogeneity assumption cannot be dropped. 

\section{NC Function Theory Background} \label{sec:nc.fnc.thry.bkgd}

\subsection{\textbf{NC sets and the nc universe}} \label{subsec:nc.sets.nc.universe}

Given any $m, n, d \in \bN$, let $M_{m \times n}^d$ denote the space of all $d$-tuples of $m \times n$ matrices with complex entries. For convenience of notation, we write
\begin{equation*}
M_{m \times n} := M_{m \times n}^1 \qand M_n^d := M_{n \times n}^d.
\end{equation*}
Using the operator norm $\left\| \cdot \right\|$ on $M_{m \times n}$, we endow $M_{m \times n}^d$ with the topology induced by the \emph{supremum norm} $\left\|\cdot\right\|_{\infty}$:
\begin{equation} \label{eqn:sup.norm.M_n^d}
\|X\|_{\infty} := \max_{1 \leq j \leq d} \|X_j\|, \foral X = (X_1, \dots, X_d) \in M_{m \times n}^d. 
\end{equation}
There is also a natural action of $GL_n(\bC)$ on each $M_n^d$ via $\cdot \colon GL_n(\bC) \times M_n^d \to M_n^d$, defined as
\begin{equation*}
S \cdot X := (S^{-1} X_1 S, \dots, S^{-1} X_d S), \foral X = (X_1, \dots, X_d) \in M_n^d \AND S \in GL_n(\bC).
\end{equation*}

\begin{definition} \label{def:nc.universe.sets.domains.functions}
Fix $d, e \in \bN$.
\begin{enumerate}[itemindent=*, leftmargin=0mm, itemsep=2mm]

\item  The \emph{$d$-dimensional nc universe} is defined to be the graded union
\begin{equation*}
\bM^d := \bigsqcup_{n = 1}^\infty M_n^d \cong \bigsqcup_{n = 1}^\infty M_n(\bC^d) \cong \bigsqcup_{n = 1}^\infty \bC^d \otimes M_n.
\end{equation*}

\item A subset $\Omega \subseteq \bM^d$ is said to be a \emph{nc set} if it is closed under direct sums, i.e.,
\begin{equation*}
X, Y \in \Omega \implies X \oplus Y \in \Omega.
\end{equation*}

\item The collection of subsets $\Omega \subset \bM^d$ whose \emph{$n$-th level}, i.e., $\Omega(n) := \Omega \cap M_n^d$ is open in $M_n^d$ for each $n \in \bN$ forms a topology on $\bM^d$ called the \emph{disjoint union topology}.

\item A \emph{nc domain} $\Omega \subseteq \bM^d$ is a nc set which is open in the disjoint union topology and level-wise connected.

\item For any $\Omega \subseteq \bM^d$, we denote its closure in the disjoint union topology by $\overline{\Omega}$, i.e.,
\begin{equation*}
\overline{\Omega} := \bigsqcup_{n = 1}^\infty \overline{\Omega(n)}.
\end{equation*}

\item \label{item:bdd.nc.set} A subset $\Omega \subseteq \bM^d$ is said to be \emph{bounded} if the levels $\Omega(n)$ are uniformly bounded in $M_n^d$ under the supremum norm $\left\| \cdot \right\|_\infty$ (as in (\ref{eqn:sup.norm.M_n^d})).

\item \label{item:nc.functions} A function $F \colon \Omega \to \bM^e$ on a nc set $\Omega \subseteq \bM^d$ is said to be a \emph{nc function} if
\begin{enumerate}[itemindent=3mm, itemsep=2mm]

\item[\textbullet] $F$ is \emph{graded}:
\begin{equation*}
X \in \Omega(n) \implies F(X) \in M_n^e, \foral n \in \bN.
\end{equation*}

\item[\textbullet] $F$ \emph{respects direct sums}:
\begin{equation*}
X, Y \in \Omega \implies F(X \oplus Y) = F(X) \oplus F(Y).
\end{equation*}

\item[\textbullet] $F$ \emph{respects similarities}:
\begin{equation*}
S \in GL_n(\bC) \textrm{ and } X, \, S \cdot X \in \Omega(n) \implies F(S \cdot X) = S \cdot F(X), \foral n \in \bN.
\end{equation*}

\end{enumerate}


\end{enumerate}
\end{definition}

While we mostly work with nc functions on nc subsets of $\bM^d$, it is important to note that one can easily work in a setting more general than $d$-tuples of matrices. 
Given any two Hilbert spaces $\cR \AND \cS$, consider $\cL(\cR,\cS)$, i.e., the space of bounded linear operators from $\cR$ into $\cS$ equipped with the family $\left\{\left\| \cdot \right\|_n\right\}_{n \in \bN}$ of matrix-operator norms:
\begin{equation*}
\left\|\left[T_{j,k}\right]\right\|_n := \left\|\left[T_{j,k}\right]\right\|_{\cL(\cR^n,\cS^n)}, \foral \left[T_{j,k}\right] \in \cL(\cR^n,\cS^n) \AND n \in \bN.
\end{equation*}
We can then easily generalize Definition \ref{def:nc.universe.sets.domains.functions} for the graded union
\begin{equation} \label{eqn:L(R,S)}
\cL(\cR,\cS)_{nc} := \bigsqcup_{n = 1}^\infty \cL(\cR^n,\cS^n) \cong \bigsqcup_{n = 1}^\infty M_n(\cL(\cR,\cS)) \cong \bigsqcup_{n = 1}^\infty \cL(\cR,\cS) \otimes M_n.
\end{equation}
NC functions are similarly defined on subsets of $\sqcup_n M_n(E)$ for any vector space $E$, and one can also naturally define a disjoint union topology once $E$ is a topological vector space; finally, one can define norms and open balls once $E$ is an \emph{operator space}. We refer the reader to \cite[Section 1.2]{KVV14} and the discussion surrounding it for a general treatise on this topic. Henceforth, we use `nc set' and `nc function' liberally for generalizations of $\bM^d$, and we hope that the meaning will be clear to the reader.

\subsection{\textbf{Free nc polynomials}} \label{subsec:nc.poly.nc.function.algebra}

Let $\bF_d^+$ be the free unital semigroup in $d$ generators $\{1, \dots, d\}$, consisting of \emph{free words} $\alpha = \alpha_1 \dots \alpha_k$ of arbitrary \emph{size} $|\alpha| := k \in \bN$. For $d$ free noncommuting variables $Z = (Z_1, \dots, Z_d)$ and a given $\alpha \in \bF_d^+$, we write $Z^\alpha := Z_{\alpha_1} \dots Z_{\alpha_k}$ for the \emph{nc monomial corresponding to the free word $\alpha$}. We allow every $Z^\alpha$ to be evaluated at any $d$-tuple of operators $T = (T_1, \dots, T_d) \in B(\cH)^d$ on a Hilbert space $\cH$ via $T^\alpha = T_{\alpha_1} \dots T_{\alpha_k}$.

\begin{definition} \label{def:free.nc.poly}
\begin{enumerate}[itemindent=*, leftmargin=0mm, itemsep=2mm]

\item \label{item:free.nc.polynomial} A \emph{free nc polynomial} (or simply, \emph{polynomial}) is a formal sum
\begin{equation*}
P(Z) := \sum_{\alpha \in \bF_d^+} c_\alpha Z^\alpha,
\end{equation*}
where all but finitely many $c_\alpha \in \bC$ are equal to $0$.

\item \label{item:nc.polynomial.algebra} We define $\bC \langle Z \rangle := \bC \langle Z_1, \dots, Z_d \rangle$ to be the \emph{algebra of free nc polynomials in $d$ free noncommuting variables $(Z_1,\dots,Z_d)$}.

\item \label{item:homogeneous.polynomial} A free nc polynomial $P$ is said to be \emph{homogeneous of degree $p$} if, for some $p \in \bN$, we have
\begin{equation*}
P(\lambda Z) = \lambda^p P(Z), \foral \lambda \in \bC.
\end{equation*}

\end{enumerate}
\end{definition}

Clearly, every $P \in \bC \langle Z \rangle$ can be evaluated on a given $X \in \bM^d$, and therefore can be realized as a function $P \colon \bM^d \to \bM^1$, which is readily seen to be a nc function. More generally, let $\cL(\cR,\cS)_{nc}$ be as in (\ref{eqn:L(R,S)}) and define an \emph{operator-valued free nc polynomial} $Q \colon \bM^d \to \cL(\cR,\cS)_{nc}$ to be the nc map given by a finite sum
\begin{equation*}
Q(Z) := \sum_{\alpha \in \bF_d^+} Q_\alpha Z^\alpha,
\end{equation*}
so that all but finitely many $Q_\alpha \in \cL(\cR,\cS)$ are $0$. Note that $Q$ is interpreted functionally via
\begin{equation*}
Q(X) = \sum_{\alpha \in \bF_d^+} Q_\alpha \otimes X^\alpha, \foral X \in \bM^d.
\end{equation*}

\subsection{\textbf{NC operator balls}} \label{subsec:nc.op.balls}

Two important examples of nc domains are the following.
\begin{enumerate}[itemindent=*, leftmargin=0mm, itemsep=2mm]

\item \label{item:nc.unit.row-ball} The \emph{nc unit row-ball} $\fB_d$, given by
\begin{equation} \label{eqn:nc.unit.row-ball}
\fB_d := \left\{ X \in \bM^d : \left \| \sum_{j = 1}^d X_j X_j^* \right \| < 1 \right\}.
\end{equation}

\item \label{item:nc.unit.polydisk} The \emph{nc unit polydisk} $\fD_d$, given by
\begin{equation} \label{eqn:nc.unit.polydisk}
\fD_d := \left\{ X \in \bM^d : \|X\|_\infty < 1 \right\}.
\end{equation}

\end{enumerate}
Clearly, $\fB_d$ and $\fD_d$ are nc domains. In this paper, we wish to study nc functions over a large class of nc domains that generalize $\fB_d$ and $\fD_d$.

\begin{definition} \label{def:nc.op.ball}
Let $\cL(\cR,\cS)_{nc}$ be as in (\ref{eqn:L(R,S)}), and consider a $d$-dimensional operator subspace $\cE \subseteq \cL(\cR,\cS)$. Let $\{Q_1, \dots, Q_d\}$ be a basis for $\cE$ and define an \emph{injective operator-valued linear polynomial} $Q \colon \bM^d \to \cL(\cR,\cS)_{nc}$ given by
\begin{equation} \label{eqn:op.val.lin.nc.poly}
Q(Z) = \sum_{j = 1}^d Q_j Z_j.
\end{equation} For such a $Q$, the corresponding \emph{nc operator ball} is defined to be
\begin{equation} \label{eqn:nc.op.ball}
\bD_Q := \left\{ X \in \bM^d : \left\|Q(X)\right\| < 1 \right\}.
\end{equation}
\end{definition}

Given any $n \in \bN$, we identify $\bD_Q(n)$ with $B_1(M_n(\cE))$ -- the open unit ball of $M_n(\cE)$ -- via
\begin{equation} \label{eqn:D_Q=B_1(M_n(E))}
\bD_Q \ni X \leftrightarrow Q(X) \in B_1(M_n(E)).
\end{equation}
Note that $Q$ is, in fact, a linear isomorphism between $M_n^d$ and $M_n(\cE)$ for each $n \in \bN$. The identification in (\ref{eqn:D_Q=B_1(M_n(E))}) shows that, despite our choice of the basis $\{ Q_1, \dots, Q_d \}$, the nc operator ball $\bD_Q$ is uniquely identified by the underlying operator space $\cE$ up to a linear change of coordinates. Equation (\ref{eqn:D_Q=B_1(M_n(E))}) also gives us that $\bD_Q(n)$ is non-empty, open and bounded, by the virtue of being a preimage of the open unit ball $B_1(M_n(\cE))$ under a linear isomorphism. This shows that $\bD_Q$ is non-empty, open with respect to the disjoint union topology, and bounded at each level. We can actually show that $\bD_Q$ is open in a stronger topology, and it is also bounded (as in Definition \ref{def:nc.universe.sets.domains.functions} (6)) by showing that $\bD_Q$ satisfies a certain convexity condition.

\begin{definition} \label{def:uniform.nc.topology}
For any $X \in M_n^d, \, n \in \bN$ and $r > 0$, we define the \emph{nc ball $\cB_{nc}(X,r)$ of radius $r$ centered at $X$} via
\begin{align*}
\cB_{nc}(X,r) := &\bigsqcup_{m = 1}^\infty \cB\left(\bigoplus_{k = 1}^m X, r\right) \\
= &\bigsqcup_{m = 1}^\infty \left\{ Y \in M_{mn}^d : \left\| Y - \bigoplus_{k = 1}^m X \right\|_\infty < r \right\}.
\end{align*}
The collection of all nc balls forms a topology on $\bM^d$, which we call the \emph{uniformly-open nc topology on $\bM^d$}. An open set with respect to this topology is called a \emph{uniformly-open nc set}.
\end{definition}

\begin{definition} \label{def:matrix.convexity}
A nc set $\Omega \subseteq \bM^d$ is called \emph{matrix convex} if for any $X = (X_1, \dots, X_d) \in \Omega(n)$ and any \emph{unital completely positive (UCP) map} $\phi \colon M_n \to M_k$, we have
\begin{equation} \label{eqn:matrix.convexity.def}
\phi(X) := \left(\phi(X_1), \dots, \phi(X_d)\right) \in \Omega(k).
\end{equation}
\end{definition}

\begin{proposition} \label{prop:boundedness.D_Q}
$\bD_Q$ is a matrix convex nc set that is bounded and uniformly-open.
\end{proposition}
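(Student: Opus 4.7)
The plan is to verify the four properties --- the nc set axiom, matrix convexity, boundedness, and uniform openness --- largely in that order, using matrix convexity as the key tool for upgrading a level-one bound to a uniform bound across all matrix sizes.

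First I would treat the nc set property and matrix convexity simultaneously. For any $X \in \bD_Q(n)$ and any UCP map $\phi \colon M_n \to M_k$, the amplification $\id_{\cL(\cR,\cS)} \otimes \phi \colon \cL(\cR,\cS) \otimes M_n \to \cL(\cR,\cS) \otimes M_k$ is again UCP and hence contractive; since $(\id_{\cL(\cR,\cS)} \otimes \phi)(Q(X)) = \sum_j Q_j \otimes \phi(X_j) = Q(\phi(X))$, we get $\|Q(\phi(X))\| \leq \|Q(X)\| < 1$, so $\phi(X) \in \bD_Q(k)$. Closure under direct sums is the analogous observation: after rearranging tensor factors, $Q(X \oplus Y)$ is unitarily equivalent to $Q(X) \oplus Q(Y)$, so $\|Q(X \oplus Y)\| = \max\{\|Q(X)\|, \|Q(Y)\|\} < 1$.

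For boundedness, I would first bound $\bD_Q(1)$: the restriction of $Q$ to $\bC^d \to \cE$ is a linear isomorphism between two finite-dimensional normed spaces, hence a bi-Lipschitz homeomorphism, so $\bD_Q(1)$ is bounded in $\bC^d$, say by some $R_1 > 0$. Now for $X \in \bD_Q(n)$ and any unit vector $\xi \in \bC^n$, the vector state $\phi_\xi(A) := \langle A\xi, \xi\rangle$ is a UCP map $M_n \to \bC$, and by the matrix convexity already established, $\phi_\xi(X) = (\langle X_j \xi, \xi\rangle)_{j=1}^d \in \bD_Q(1)$; taking the supremum over unit vectors $\xi$ yields $w(X_j) \leq R_1$ and hence $\|X_j\| \leq 2 R_1$ for every $j$, uniformly in $n$.

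Finally, for uniform openness, fix $X \in \bD_Q(n)$ and set $\delta := 1 - \|Q(X)\| > 0$. Writing $X^{(m)} := \bigoplus_{k=1}^m X$, the operator $Q(X^{(m)})$ is unitarily equivalent to $\bigoplus_{k=1}^m Q(X)$, so $\|Q(X^{(m)})\| = \|Q(X)\|$; combined with the straightforward estimate $\|Q(W)\| \leq \sum_j \|Q_j\| \|W_j\| \leq d M \|W\|_\infty$, where $M := \max_j \|Q_j\|$, the triangle inequality yields $\|Q(Y)\| \leq \|Q(X)\| + dM\|Y - X^{(m)}\|_\infty$ for any $Y \in M_{mn}^d$. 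Choosing $r < \delta/(dM)$ independently of $m$ therefore puts the whole nc ball $\cB_{nc}(X,r)$ inside $\bD_Q$. The main obstacle is the boundedness step: the level-wise isomorphism $Q \colon M_n^d \to M_n(\cE)$ is bi-Lipschitz for each fixed $n$, but there is no obvious reason its Lipschitz constants should be controlled uniformly in $n$; the matrix-convexity argument above is precisely what replaces this with a level-one estimate applied through vector states, at the cost of an extra factor of $2$ coming from the numerical radius.
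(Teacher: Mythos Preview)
Your proof is correct and follows essentially the same approach as the paper: matrix convexity via contractivity of $\id \otimes \phi$ on the operator space, and boundedness by applying vector states to reduce to the level-one bound (your direct numerical-radius estimate is exactly the paper's contrapositive argument, with the same factor of $2$). For uniform openness the paper simply cites its earlier work \cite{SampatShalit25}, whereas you supply the explicit Lipschitz estimate $\|Q(W)\| \leq dM\|W\|_\infty$; this is a welcome self-contained addition but not a different idea.
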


\begin{proof}
Let $X = (X_1, \dots, X_d) \in \bD_Q(n)$ and $\phi \colon M_n \to M_k$ be a UCP map. Note that
\begin{equation} \label{eqn:matrix.convexity.D_Q.ineq}
\left\| Q(\phi(X)) \right\| = \left\| \left[\id_{\cL(\cR,\cS)} \otimes \phi \right] Q(X) \right\| \leq \|Q(X)\| < 1.
\end{equation}
The first inequality in (\ref{eqn:matrix.convexity.D_Q.ineq}) follows from \cite[Proposition 2.1.1]{Pis03} and \cite[Proposition 3.6]{Pau02}. Hence, we get (\ref{eqn:matrix.convexity.def}) and it follows that $\bD_Q$ is a matrix convex nc set.

To show boundedness of $\bD_Q$, first note that $\bD_Q(1)$ is bounded by the discussion following (\ref{eqn:D_Q=B_1(M_n(E))}). Since $\bD_Q(1)$ has a non-empty interior, boundedness of $\bD_Q$ follows from \cite[Lemma 3.4]{davidson2017dilations}. For completeness, let us show directly that if $\|x\|_\infty <r$ for all $x \in \bD_Q(1)$, then $\|X\|_\infty < 2r$ for all $X \in \bD_Q$ (the argument works for any matrix convex set, and the constant $2$ is optimal). 
Assume therefore, that
\begin{equation} \label{eqn:choice.of.r}
\bD_Q(1) \subset r \bD^d,
\end{equation}
where $\mathbb{D}$ is the open unit disk in $\mathbb{C}$.
Let $X = (X_1, \dots, X_d) \in M_n^d$ such that $\|X_j\| \geq 2r$ for some $1 \leq j \leq d$. We can find a unit vector $v \in \bC^n$ such that
\begin{equation} \label{eqn:choice.of.v}
|\ip{X_j v, v}_{\bC^n}| \geq r
\end{equation}
and use it to define a \emph{state} $\phi_v \colon M_n \to M_1$ via
\begin{equation*}
\phi_v(Y) := \ip{Yv,v}_{\bC^n}, \foral Y \in M_n.
\end{equation*}
By (\ref{eqn:choice.of.v}), we get that
\begin{equation*}
|\phi_v(X_j)| = |\ip{X_j v , v}_{\bC^n}| \geq r,
\end{equation*}
and hence $\phi_v(X) \not\in r \bD^d$, and so $\phi_v(X) \notin \bD_Q(1)$. 
Since every state is also UCP (see \cite[Proposition 3.8]{Pau02}) the matrix convex set $\bD_Q$ is closed under application of states, and we conclude that $X \notin \bD_Q$. 

Lastly, it has already been established in \cite[Section 3.2]{SampatShalit25}  that $\bD_Q$ is a uniformly-open nc set. This completes the proof.
\end{proof}

\begin{example}
Both $\fB_d$ and $\fD_d$ are nc operator balls.
\begin{enumerate}[itemindent=*, leftmargin=0mm, itemsep=2mm]

\item $\cE = \textit{row operator space } \bC^d_{\text{row}} \AND Q(Z) = \begin{bmatrix}
    Z_1 & \dots & Z_d
\end{bmatrix} \implies \bD_Q = \fB_d$.

\item $\cE = \textit{minimal operator space } \operatorname{min}(\ell^\infty(\bC^d))\AND Q(Z) = \diag(Z_1, \dots, Z_d) \implies \bD_Q = \fD_d$.

\end{enumerate}
\end{example}

One might wish to generalize further and consider \emph{nc homogeneous polyhedrons} $\bD_Q$ given by operator-valued homogeneous free nc polynomials $Q$ of higher degree but, as was pointed out in \cite[Remark 1.2]{SampatShalit25}, such homogeneous bounded nc polyhedra are unbounded. 
Thus, if we wish to consider algebras of bounded nc functions, nc operator balls are a natural setting. 
That said, the reader should keep in mind that there are other nc domains of interest besides nc operator balls (such as free spectrahedra \cite{AHKM,evert2017circular} and nc polydomains \cite{popescu2020brerezin}).

\subsection{\textbf{NC function algebras}} \label{subsec:nc.function.algebras}

A striking feature of nc function theory is that a mild local boundedness condition on a nc function $F$ is enough to guarantee holomorphicity of $F$. In fact, a bounded nc function is always holomorphic in all nc topologies of interest. This motivates us to define the algebra $H^\infty(\bD_Q)$ of bounded nc functions on $\bD_Q$, i.e.,
\begin{equation*}
H^\infty(\bD_Q) := \left\{ F \colon \bD_Q \to \bM^1 : F \text{ is nc and } \|F\|_\infty := \sup_{X \in \bD_Q} \|F(X)\| < \infty \right\}.
\end{equation*}
The protagonist in \cite{SampatShalit25} was the separable subalgebra $A(\bD_Q) \subsetneq H^\infty(\bD_Q)$ defined as
\begin{equation*}
A(\bD_Q) := \overline{\bC \langle Z \rangle}^{\left\|\cdot\right\|_\infty}.
\end{equation*}

By Proposition \ref{prop:boundedness.D_Q}, we know that $\bD_Q$ is uniformly-open. Applying \cite[Theorem 7.21]{KVV14} therefore gives us at once that every $F \in H^\infty(\bD_Q)$ has a \emph{nc power-series expansion (around $0$)}, i.e., there exist $\left\{ c_\alpha \right\}_{\alpha \in \bF_d^+} \subset \bC$ such that
\begin{equation*}
F(Z) = \sum_{\alpha \in \bF_d^+} c_\alpha Z^\alpha.
\end{equation*}
The nc power-series converges absolutely and uniformly on $r \bD_Q$ for every $r < 1$. The nc power-series expansion also gives us a formal \emph{homogeneous expansion} $F = \sum_k F_k$, where
\begin{equation} \label{eqn:kth.homogeneous.term}
F_k(Z) := \sum_{|\alpha| = k} c_\alpha Z^\alpha, \foral k \in \bN \cup \{0\}.
\end{equation}

We exploited properties of the homogeneous expansion in order to prove a \emph{homogeneous nc Nullstellensatz} for both $H^\infty(\bD_Q)$ and $A(\bD_Q)$ \cite[Proposition 3.4]{SampatShalit25}, which further enabled us to classify $A(\bD_Q)$ and its quotients up to completely isometric isomorphisms. As for $H^\infty(\bD_Q)$, we obtained a homogeneous nc Nullstellensatz but not a complete classification result. One of the main challenges we had was working with the \emph{bounded pointwise topology} on $H^\infty(\bD_Q)$, and the lack of a good understanding of the finite dimensional bounded pointwise continuous representations of $H^\infty(\bD_Q)$.

\begin{definition} \label{def:bdd.pt-wise.topology}
The bounded pointwise topology on $H^\infty(\bD_Q)$ is the strongest locally convex topology in which a bounded net $\left\{ F_{\kappa} \right\}_{\kappa \in K} \subset H^\infty(\bD_Q)$ converges to some $F \in H^\infty(\bD_Q)$ if and only if
\begin{equation*}
F_\kappa (X) \xrightarrow{\left\|\cdot\right\|_\infty} F(X), \foral X \in \bD_Q.
\end{equation*}
\end{definition}

In Section \ref{section:canon.w-*.top.H^infty(D_Q)} we will show that $H^\infty(\bD_Q)$ is a dual space and that the topology of pointwise convergence on $H^\infty(\bD_Q)$ coincides with the weak-* topology on bounded sets. 

\subsection{\textbf{NC subvarieties and ideals}} \label{subsec:nc_subvarieties_and_ideals}


\begin{definition} \label{def:subvarieties}

\begin{enumerate}[itemindent=*, leftmargin=0mm, itemsep=2mm]

\item A \emph{nc (analytic) subvariety} $\fV \subseteq \bD_Q$ is the common zero set of a collection of nc functions $S \subseteq H^\infty(\bD_Q)$, i.e.,
\begin{equation*}
\fV := V_{\bD_Q}(S) = \left\{ X \in \bD_Q : F(X) = 0, \foral F \in S \right\}.
\end{equation*}
The subvariety $\fV$ is said to be \emph{algebraic} if $S \subseteq \bC \langle Z \rangle$. 

\item The \emph{corresponding ideal} $I(\fV) \subseteq H^\infty(\bD_Q)$ is defined as
\begin{equation*}
I(\fV) := \left\{ F \in H^\infty(\bD_Q) : F(X) = 0, \foral X \in \fV \right\}.
\end{equation*}

\item $\fV$ is said to be \emph{homogeneous} if $\lambda \fV \subseteq \fV$ for each $\lambda \in \bD$.

\item An ideal $\cI$ is said to be \emph{homogeneous} if for each $F = \sum_k F_k \in \cI$, as in \eqref{eqn:kth.homogeneous.term}, we have $F_k \in \cI \foral k \in \bN \cup \{0\}$. Equivalently, $\cI$ is homogeneous if for each $F \in \cI$ and $0 < r < 1$, the function $F_r \colon X \mapsto F(rX)$ lies in $\cI$ \cite[Proposition 3.6]{SampatShalit25}.

\end{enumerate}

\end{definition}

\begin{remark} \label{remark:equiv.hom.var.ideal}
One can check that if $\fV$ is a homogeneous nc subvariety then $I(\fV)$ is homogeneous as well. Conversely, if $\cI$ is a homogeneous ideal then $V_{\bD_Q}(\cI)$ is also homogeneous.
\end{remark}

We now define the operator algebras $H^\infty(\fV)$ and $A(\fV)$ over a nc subvariety $\fV \subseteq \bD_Q$ as
\begin{align*}
H^\infty(\fV) &:= \left\{ f \colon \fV \to \bM^1 : f \text{ is nc and } \|f\|_\fV := \sup_{X \in \fV} \|f(X)\| < \infty \right\}, \\
A(\fV) &:= \overline{\left\{ p \big\vert_{\fV} : p \in \bC \langle Z \rangle \right\}}^{\left\|\cdot\right\|_{\fV}}.
\end{align*}
The linear map $R \colon H^\infty(\bD_Q) \to H^\infty(\fV)$ given by
\begin{equation*}
R(F) = F \big\vert_{\fV}, \foral F \in H^\infty(\bD_Q)
\end{equation*}
is a surjective complete contraction with $\ker(R) = I(\fV)$ and, moreover, the induced quotient map $\overline{R} \colon H^\infty(\bD_Q)/I(\fV) \to H^\infty(\fV)$ is a completely isometric isomorphism \cite[Theorem 4.8]{SampatShalit25}. For $A(\fV)$, the analogous statement was shown only under the assumption that $\fV$ is homogeneous. This identification, combined with the homogeneous nc Nullstellensatz, allowed us to classify quotients of $A(\bD_Q)$ by homogeneous ideals up to completely isometric isomorphisms. In this paper, we go beyond homogeneous subvarieties, and obtain a classification result for the quotient algebras $H^\infty(\bD_Q)/I(\fV) \cong H^\infty(\fV)$. To achieve this, we first show that the topology of pointwise convergence can be realized as the weak-* topology by identifying $H^\infty(\bD_Q)$ as a dual space.

\section{Canonical weak-* topology on \texorpdfstring{$H^\infty(\bD_Q)$}{H(D\_Q)}} \label{section:canon.w-*.top.H^infty(D_Q)}

Let $\cY$ be a Banach space and let $\cX \subseteq \cY^*$ be a subspace. 
Then, the canonical embedding of $\cY$ into $\cY^{**}$ turns $\cY$ into a space of bounded linear functionals on $\cX$. If under this embedding we have $\cY = \cX^*$, then we say that $\cX$ is a predual of $\cY$. 
We note that $\cX$ is a predual of $\cY$ if and only if it satisfies the following properties (see, e.g., \cite[Section 2]{DW11}):
\begin{enumerate}[itemindent=*, leftmargin=0mm, itemsep=2mm]

\item $\cX$ \emph{norms} $\cY$, i.e.,
\begin{equation} \label{eqn:B.norms.H^infty(D_Q)}
\sup \{ |\varphi(y)| : \|\varphi\| \leq 1, \, \varphi \in \cX \} = \|y\|_\cY, \foral y \in \cY.
\end{equation}

\item The closed unit ball $\overline{B_1(\cY)}$ is compact in the $\sigma(\cY, \cX)$ topology, i.e, the weakest topology on $\cY$ such that the functionals $\varphi \in \cX$ are continuous.

\end{enumerate}

We use this fact to obtain a canonical predual of $H^\infty(\bD_Q)$ for any $\bD_Q$. 
Let $Q$ and $\bD_Q$ be as in Definition \ref{def:nc.op.ball}. For any $n \in \bN$ and $X \in \bD_Q(n)$, define $\Phi_X \colon H^\infty(\bD_Q) \to M_n$ via
\begin{equation} \label{eqn:def.Phi_X}
\Phi_X(F) := F(X), \foral F \in H^\infty(\bD_Q).
\end{equation}
Then, for any $\eta \in M_n^*$, we get $\varphi_{X,\eta} \in (H^\infty(\bD_Q))^*$ given by
\begin{equation} \label{eqn:canonical.element.predual}
\varphi_{X,\eta} := \eta \circ \Phi_X.
\end{equation}
Define $\cX(\bD_Q) \subseteq (H^\infty(\bD_Q))^*$ to be the closed linear hull of all $\varphi_{X, \eta}$, i.e.,
\begin{equation} \label{eqn:def.predual}
\cX(\bD_Q) := \ol{\spn} \left\{ \varphi_{X,\eta} : X \in \bD_Q(n) \AND \eta \in M_n^*, \foral n \in \bN \right\}.
\end{equation}

\begin{theorem} \label{theorem:predual.H^infty(D_Q)}
$\cX(\bD_Q)$ is the unique predual $\cX$ of $H^\infty(\bD_Q)$ for which the point evaluation $\Phi_X$ given in \eqref{eqn:def.Phi_X} is $\sigma(H^\infty(\bD_Q), \cX)$ continuous for every $X \in \bD_Q$.
\end{theorem}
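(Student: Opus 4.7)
The strategy is to invoke a standard Banach space criterion (Dixmier--Ng): as recalled in the excerpt, a subspace $\cX \subseteq \cY^*$ is a predual of $\cY$ precisely when $\cX$ norms $\cY$ and $\overline{B_1(\cY)}$ is $\sigma(\cY, \cX)$-compact. Applied to $\cY = H^\infty(\bD_Q)$ and $\cX = \cX(\bD_Q)$, this reduces the existence half of the theorem to two verifications. Uniqueness is then handled by a separate Hahn--Banach argument which crucially uses that a bounded nc function vanishing on all of $\bD_Q$ is identically zero.

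The norming condition \eqref{eqn:B.norms.H^infty(D_Q)} is nearly immediate: given $F \in H^\infty(\bD_Q)$ and $\ep > 0$, choose $X \in \bD_Q(n)$ with $\|F(X)\|$ within $\ep$ of $\|F\|_\infty$, then $\eta \in M_n^*$ of norm at most $1$ with $|\eta(F(X))|$ within $\ep$ of $\|F(X)\|$. Since $\|\varphi_{X,\eta}\| \leq \|\eta\| \leq 1$, this shows that $\cX(\bD_Q)$ norms $H^\infty(\bD_Q)$.

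For the compactness part, the key observation is that $\sigma(H^\infty(\bD_Q), \cX(\bD_Q))$ is by construction the topology of pointwise convergence: a net $\{F_\kappa\}$ converges in this topology to $F$ if and only if $\eta(F_\kappa(X)) \to \eta(F(X))$ for all $X$ and $\eta$, which is the same as $F_\kappa(X) \to F(X)$ in $M_n$ for every $X \in \bD_Q(n)$. Thus $\overline{B_1(H^\infty(\bD_Q))}$ embeds into the Tychonoff product $\prod_{n \in \bN} \prod_{X \in \bD_Q(n)} \overline{B_1(M_n)}$, which is compact. The one nontrivial step --- and what I expect to be the main point to write out with care --- is that the image is closed inside this product: if a net of nc functions of sup norm at most $1$ converges pointwise to some $F \colon \bD_Q \to \bigsqcup_n M_n$, then $F$ is itself a nc function of sup norm at most $1$. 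The norm bound is clear; the three defining properties of nc functions (grading, respect for direct sums, respect for similarities) are each pointwise identities involving finitely many evaluations and continuous matrix operations, and therefore survive pointwise limits.

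For uniqueness, let $\cX' \subseteq (H^\infty(\bD_Q))^*$ be any other predual under which each $\Phi_X$ is $\sigma(H^\infty(\bD_Q), \cX')$-continuous. Then for each $X \in \bD_Q(n)$ and $\eta \in M_n^*$, the composition $\varphi_{X,\eta} = \eta \circ \Phi_X$ is $\sigma(H^\infty(\bD_Q), \cX')$-continuous, hence lies in $\cX'$, so $\cX(\bD_Q) \subseteq \cX'$. If this inclusion were strict, Hahn--Banach inside the Banach space $\cX'$ would furnish some $\psi \in (\cX')^* = H^\infty(\bD_Q)$ vanishing on $\cX(\bD_Q)$ but not identically zero on $\cX'$. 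However, $\psi|_{\cX(\bD_Q)} = 0$ forces $\eta(\psi(X)) = 0$ for every $X \in \bD_Q$ and $\eta \in M_n^*$, hence $\psi(X) = 0$ for every $X \in \bD_Q$, and so $\psi \equiv 0$, contradicting the choice of $\psi$. Therefore $\cX' = \cX(\bD_Q)$.
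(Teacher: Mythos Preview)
Your proposal is correct and follows essentially the same route as the paper: verify the Dixmier--Ng criterion (norming plus compactness of the ball via a Tychonoff embedding, with closedness coming from the fact that the nc-function axioms are preserved under pointwise limits), then run the Hahn--Banach argument for uniqueness. The only cosmetic difference is that the paper indexes its Tychonoff product by the functionals $\varphi \in \cX(\bD_Q)$ rather than by the points $X \in \bD_Q$, which lets it work directly with the $\sigma(H^\infty(\bD_Q),\cX(\bD_Q))$ topology; your assertion that this topology \emph{equals} pointwise convergence is only literally true on bounded sets (since $\cX(\bD_Q)$ is the norm closure, not just the span, of the $\varphi_{X,\eta}$), but as you immediately restrict to $\overline{B_1(H^\infty(\bD_Q))}$ this does no harm.
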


\begin{proof}
In order to show that $\cX(\bD_Q)$ is a predual of $H^\infty(\bD_Q)$, we need to show items $(1) \AND (2)$ as in the introduction of this Section for $\cX = \cX(\bD_Q)$ and $\cY = H^\infty(\bD_Q)$.
\begin{enumerate}[itemindent=*, leftmargin=0mm, itemsep=2mm]

\item Fix $F \in H^\infty(\bD_Q)$ and let $X \in \bD_Q(n)$ be given for some $n \in \bN$. Choose $\eta_X \in M_n^*$ so that
\begin{equation} \label{eqn:choice.of.eta}
|\eta_X (F(X))| = \| F(X) \| \AND \|\eta_X\| = 1.
\end{equation}
Then, we have
\begin{equation} \label{eqn:predual.check.1}
|\varphi_{X,\eta_X}(F)| = |\eta_X (F(X))| = \|F(X)\|.
\end{equation}
Combining (\ref{eqn:canonical.element.predual}) with (\ref{eqn:choice.of.eta}), we know that
\begin{equation} \label{eqn:norm.of.varphi_X,eta}
\left\|\varphi_{X, \eta_X}\right\| \leq \left\|\eta_X\right\| \left\| \Phi_X \right\| = \left\|\Phi_X\right\| \leq 1.
\end{equation}
Taking the supremum over every $X \in \bD_Q$ in (\ref{eqn:predual.check.1}) and using (\ref{eqn:norm.of.varphi_X,eta}) gives us the inequality `$\geq$' in (\ref{eqn:B.norms.H^infty(D_Q)}). The reverse inequality is trivial, 
therefore (\ref{eqn:B.norms.H^infty(D_Q)}) holds and we get that $\cX(\bD_Q)$ norms $H^\infty(\bD_Q)$.

\item Let us define
\begin{equation*}
\cC := \prod_{\varphi \in \cX(\bD_Q)} \|\varphi\| \overline{\bD} \subset \bC^{\cX(\bD_Q)},
\end{equation*}
where we endow $\bC^{\cX(\bD_Q)}$ with the topology of coordinate-wise convergence. By Tychonoff's theorem, $\cC$ is a compact set. Now, let $\Psi \colon H^\infty(\bD_Q) \to \bC^{\cX(\bD_Q)}$ be given by
\begin{equation*}
\Psi(F) := \left(\varphi(F)\right)_{\varphi \in \cX(\bD_Q)}, \foral F \in H^\infty(\bD_Q).
\end{equation*}
It is clear that $\Psi\left(\overline{B_1(H^\infty(\bD_Q))}\right) \subseteq \cC$. Therefore, it suffices to show that $\Psi$ restricts to a homeomorphism on $\overline{B_1(H^\infty(\bD_Q))}$ with respect to $\sigma := \sigma(H^\infty(\bD_Q),\cX(\bD_Q))$, and that $\Psi\left(\overline{B_1(H^\infty(\bD_Q))}\right)$ is closed in $\bC^{\cX(\bD_Q)}$.

\vspace{2mm}

\noindent \emph{\underline{Step 1: $\Psi$ is continuous on $H^\infty(\bD_Q)$ with respect to $\sigma$.}}

\vspace{2mm}

Let $\left\{ F_\kappa \right\}_{\kappa \in K} \subset H^\infty(\bD_Q)$ be a net that converges to some $F \in H^\infty(\bD_Q)$ under $\sigma$, i.e.,
\begin{equation} \label{eqn:sigma.conv.F_k}
\varphi(F_\kappa) \to \varphi(F), \foral \varphi \in \cX(\bD_Q).
\end{equation}
As $\bC^{\cX(\bD_Q)}$ is endowed with the topology of coordinate-wise convergence, it follows easily from (\ref{eqn:sigma.conv.F_k}) that $\Psi$ is continuous on $H^\infty(\bD_Q)$ with respect to $\sigma$.

\vspace{2mm}

\noindent \emph{\underline{Step 2: $\Psi$ is injective.}}

\vspace{2mm}

First, we introduce, for each $n \in \bN$ and $1 \leq j,k \leq n$, the functionals $\eta^{(n)}_{j,k} \in M_n^*$ given by
\begin{equation} \label{eqn:def.eta^n_j,k}
\eta^{(n)}_{j,k}(A) := \left(A\right)_{j,k}, \foral A \in M_n.
\end{equation}
where $\left(A\right)_{j,k}$ denotes the $(j,k)^\text{th}$ entry of $A$. Now, if $\Psi(F) = \Psi(G)$ for some $F, G \in H^\infty(\bD_Q)$ then, for each $n \in \bN$, we have
\begin{equation*}
\varphi_{X, \eta^{(n)}_{j,k}}(F) = \varphi_{X, \eta^{(n)}_{j,k}}(G), \foral X \in \bD_Q(n) \AND 1 \leq j,k \leq n,
\end{equation*}
where $\varphi_{X, \eta^{(n)}_{j,k}}$ are defined as in (\ref{eqn:canonical.element.predual}). It follows that $F = G$ and therefore, $\Psi$ is injective.

\vspace{2mm}

\noindent \emph{\underline{Step 3: $\Psi\left(\overline{B_1(H^\infty(\bD_Q))}\right)$ is closed in $\bC^{\cX(\bD_Q)}$.}}

\vspace{2mm}

Let $\left\{F_\kappa\right\}_{\kappa \in K} \subseteq \overline{B_1(H^\infty(\bD_Q))}$ be a net such that
\begin{equation*}
\lim_{\kappa \in K} \Psi(F_\kappa) = \left( c_\varphi \right)_{\varphi \in \cX(\bD_Q)} =: c \in \bC^{\cX(\bD_Q)}.
\end{equation*}
All we need to show is that there exists a nc function $F \colon \bM^d \to \bM^1$ such that
\begin{equation} \label{eqn:Psi(F)=c}
\Psi(F) = c,
\end{equation}
because it is easy to see that, in fact, we have $c \in \cC$, and that this implies using (\ref{eqn:Psi(F)=c}) and part (1) of the proof that $F \in \overline{B_1(H^\infty(\bD_Q))}$. To this end, consider $\eta^{(n)}_{j,k} \in M_n^*$ as in (\ref{eqn:def.eta^n_j,k}) for each $1 \leq j,k \leq n$ and $n \in \bN$, and define a graded map $F \colon \bM^d \to \bM^1$ via
\begin{equation} \label{eqn:def.F.for.Psi(F)=c}
F(X) := \left[ c_{\varphi_{X, \eta^{(n)}_{j,k}}} \right]_{n \times n}, \foral X \in \bD_Q(n) \AND n \in \bN.
\end{equation}
It is a routine calculation to check that $F$ is a nc map, so we only need to verify \eqref{eqn:Psi(F)=c}.

First, note that for any $n \in \bN$, (\ref{eqn:def.F.for.Psi(F)=c}) implies
\begin{equation} \label{eqn:F=c.entry-wise}
\varphi_{X, \eta^{(n)}_{j,k}}(F) = c_{\varphi_{X, \eta^{(n)}_{j,k}}}, \foral X \in \bD_Q(n) \AND 1 \leq j,k \leq n.
\end{equation}
Secondly, for any $\eta, \xi \in M_n^*$ and $a \in \bC$, one can check that
\begin{align} \label{eqn:linearity.in.eta}
c_{\varphi_{X, a \eta + \xi}} = a c_{\varphi{{X, \eta}}} + c_{\varphi_{X, \xi}}.
\end{align}
Lastly, note that for each $n \in \bN$ we have
\begin{equation} \label{eqn:M_n^*.is.span.eta^n_j,k}
M_n^* = \spn \left\{ \eta^{(n)}_{j,k} : 1 \leq j,k \leq n \right\}.
\end{equation}
Combining (\ref{eqn:F=c.entry-wise}), (\ref{eqn:linearity.in.eta}) and (\ref{eqn:M_n^*.is.span.eta^n_j,k}) at once shows that
\begin{equation*}
\varphi(F) = c_\varphi, \foral \varphi \in \cX(\bD_Q).
\end{equation*}
In particular, (\ref{eqn:Psi(F)=c}) holds, and we get that $\Psi\left(\overline{B_1(H^\infty(\bD_Q))}\right)$ is closed in $\bC^{\cX(\bD_Q)}$.

\vspace{2mm}

\noindent \emph{\underline{Step 4: $\Psi^{-1}$ is continuous on $\Psi\left(\overline{B_1(H^\infty(\bD_Q))}\right)$.}}

\vspace{2mm}

Let $K$ be an index set, and let $\left\{ F_\kappa \right\}_{\kappa \in K} \subset \overline{B_1(H^\infty(\bD_Q))}$ be a net such that $\left( \varphi(F_\kappa) \right)_{\varphi \in \cX(\bD_Q)}$ is convergent in $\bC^{\cX(\bD_Q)}$. From \emph{Step 3}, we know that
\begin{equation*}
\left(\varphi(F_\kappa)\right)_{\varphi \in \cX(\bD_Q)} \to \left(\varphi(F)\right)_{\varphi \in \cX(\bD_Q)}, \text{ for some } F \in \overline{B_1(H^\infty(\bD_Q))}.
\end{equation*}
Since $\bC^{\cX(\bD_Q)}$ is endowed with the topology of coordinate-wise convergence, it follows that
\begin{equation*}
F_\kappa \xrightarrow{\sigma} F.
\end{equation*}
Thus, $\Psi^{-1}$ is continuous on $\Psi\left(\overline{B_1(H^\infty(\bD_Q))}\right)$, and we conclude that (2) holds for $\cX(\bD_Q)$.

\end{enumerate}

It follows from $(1) \AND (2)$ that $\cX(\bD_Q)$ is a predual of $H^\infty(\bD_Q)$. We now show that $\cX(\bD_Q)$ is the unique predual of $H^\infty(\bD_Q)$ for which $\Phi_X$ (as in \eqref{eqn:def.Phi_X}) is $\sigma$ continuous for each $X \in \bD_Q$. To this end, fix $X \in \bD_Q(n)$ for some $n \in \bN$, and let $\left\{F_\kappa\right\}_{\kappa \in K}$ be a net in $H^\infty(\bD_Q)$ that converges to some $F \in H^\infty(\bD_Q)$ with respect to $\sigma$, i.e.,
\begin{equation*}
\lim_{\kappa \in K} \varphi(F_\kappa) = \varphi(F), \foral \varphi \in \cX(\bD_Q).
\end{equation*}
Using $\varphi_{X, \eta^{(n)}_{j,k}} \in \cX(\bD_Q)$ as in \emph{Step 2}, we know that $F_\kappa(X)$ converges to $F(X)$ entry-wise, i.e,
\begin{equation} \label{eqn:entry-wise.conv}
\lim_{\kappa \in K}(F_\kappa(X))_{j,k} = (F(X))_{j,k}, \foral 1 \leq j,k \leq n.
\end{equation}
In particular, $F_\kappa(X)$ converges to $F(X)$ with respect to $\left\|\cdot\right\|$. As $n \in \bN$ and $X \in \bD_Q(n)$ were arbitrarily chosen, it follows that $\Phi_X$ is $\sigma$ continuous for all $X \in \bD_Q$.

Let $\widetilde{\cX} \subseteq (H^\infty(\bD_Q))^*$ be another predual for which the maps $\Phi_X$ are $\widetilde{\sigma} := \sigma(H^\infty(\bD_Q), \widetilde{\cX})$ continuous. Now, for any given $n \in \bN$, the functional $\varphi_{X, \eta}$ (as in \eqref{eqn:canonical.element.predual}) is $\widetilde{\sigma}$ continuous for each $X \in \bD_Q(n)$ and $\eta \in M_n^*$. A standard exercise in functional analysis then shows that all such $\varphi_{X, \eta}$ must lie in $\widetilde{\cX}$. In particular, we get $\cX(\bD_Q) \subseteq \widetilde{\cX}$. If $0 \neq \widetilde{\varphi} \in \widetilde{\cX} \setminus \cX(\bD_Q)$, then by the Hahn-Banach extension theorem there exists some $F \in H^\infty(\bD_Q)$ such that
\begin{equation} \label{eqn:Hahn-Banach.cor}
\widetilde{\varphi}(F) = 1 \AND \varphi(F) = 0, \foral \varphi \in \cX(\bD_Q).
\end{equation}
However, the second half of \eqref{eqn:Hahn-Banach.cor} implies that for every $n \in \bN$ and $X \in \bD_Q(n)$ we have
\begin{equation*}
0 = \varphi_{X, \eta^{(n)}_{j,k}}(F) = (F(X))_{j,k}, \foral 1 \leq j,k \leq n.
\end{equation*}
This means that $F = 0$, and that $\widetilde{\varphi}(F) = 0$, which is absurd considering \eqref{eqn:Hahn-Banach.cor}. Therefore, $\widetilde{\cX} = \cX(\bD_Q)$, and it is the unique predual such that $\Phi_X$ is $\sigma$ continuous for every $X \in \bD_Q$.
\end{proof}

\begin{remark} \label{remark:weak-*.topology}
In view of Theorem \ref{theorem:predual.H^infty(D_Q)}, we denote $(H^\infty(\bD_Q))_* := \cX(\bD_Q)$ to be the unique predual of $H^\infty(\bD_Q)$ such that the point evaluation $\Phi_X$ is weak-* continuous for all $X \in \bD_Q$. 
From now on we shall refer to the topology $\sigma(H^\infty(\bD_Q), \cX(\bD_Q))$ as the (canonical) weak-* topology on $H^\infty(\bD_Q)$. 
It is known that the predual of $H^\infty(\fB_d)$ is (strongly) unique (see \cite{Ando78} for the case $d=1$ and \cite{DW11} for the case $d \geq 2$), but we do not know if this holds in general. 
\end{remark}

\begin{remark} \label{remark:equivalence.weak-*.bdd.pt-wise}
For $\bD_Q = \fB_d$, it is well-known that $H^\infty(\fB_d)$ can be represented as the \emph{multiplier algebra} of the \emph{nc Drury--Arveson space} $\cH^2_d$ and, therefore, it has a natural weak operator topology. Davidson and Pitts worked out that the weak operator topology and the weak-* topology on $H^\infty(\bD_Q)$ coincide \cite[Corollary 2.12]{DP99}. In \cite[Section 2]{SampatShalit25}, we showed that such a multiplier algebra representation might not be possible for the algebra of bounded nc functions on a general $\bD_Q$.
\end{remark}

It is straightforward to conclude that the weak-* topology and the topology of pointwise convergence on $H^\infty(\bD_Q)$ coincide. As a result, the observations from \cite[Sections 3 and 4]{SampatShalit25} can now be understood in terms of the canonical weak-* topology. We recall these results for the reader's convenience. 
Note that, strictly speaking, the topology of {\em bounded} pointwise convergence used in \cite{SampatShalit25} does not coincide with the weak-* topology, but continuity of linear maps relative to one is equivalent to continuity relative to the other (see \cite[Theorem V.5.6]{dunford1988linear}), and similarly for closures. 
Therefore, we may transfer the results to here. 

\begin{proposition} \label{prop:homogeneous.exp.properties}
Let $F = \sum_k F_k$ be the homogeneous expansion of any given $F \in H^\infty(\bD_Q)$ (as in \eqref{eqn:kth.homogeneous.term}).

\begin{enumerate}[itemindent=*, leftmargin=0mm, itemsep=2mm]

\item The map $P_k \colon H^\infty(\bD_Q) \to \bC \langle Z \rangle$ given by
\begin{equation*}
P_k(F) = F_k, \foral F \in H^\infty(\bD_Q)
\end{equation*}
is weak-* continuous and completely contractive.

\item For any $F \in H^\infty(\bD_Q)$, the weak-* limit of Ces\`aro sums $\Sigma_k(F)$ of the homogeneous expansion $\sum_k F_k$ exists, and it is equal to $F$.

\end{enumerate}
\end{proposition}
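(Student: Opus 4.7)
The plan is to exploit the circular symmetry of $\bD_Q$: since the defining map $Q$ in \eqref{eqn:op.val.lin.nc.poly} is linear, $e^{i\theta}X \in \bD_Q$ for every $X \in \bD_Q$ and $\theta \in \bR$, and for each $X \in \bD_Q(n)$ the scalar-parameter function $\lambda \mapsto F(\lambda X)$ is $M_n$-valued holomorphic on $\{|\lambda| < 1/\|Q(X)\|\}$, which strictly contains the closed unit disk. The classical Cauchy formula for its Taylor coefficients then yields
\begin{equation*}
F_k(X) = \frac{1}{2\pi}\int_0^{2\pi} e^{-ik\theta} F(e^{i\theta}X)\, d\theta, \qquad X \in \bD_Q(n),
\end{equation*}
and, analogously, the $N$-th Cesàro sum of the homogeneous expansion admits the Fej\'er-kernel representation
\begin{equation*}
\Sigma_N(F)(X) = \frac{1}{2\pi}\int_0^{2\pi} K_N(\theta)\, F(e^{i\theta}X)\, d\theta.
\end{equation*}
These two integral formulas power the entire proof.

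For part (1), passing norms inside the first integral immediately gives $\|F_k(X)\| \leq \|F\|_\infty$; carrying out the same argument on matrix amplifications, or equivalently applying it entrywise, upgrades this to complete contractivity of $P_k$. For weak-* continuity of $P_k$, I would invoke the Krein--Smulian theorem to reduce to continuity on the closed unit ball of $H^\infty(\bD_Q)$, where by Theorem \ref{theorem:predual.H^infty(D_Q)} the weak-* topology coincides with the topology of pointwise convergence on $\bD_Q$. If $\{F_\kappa\}$ is a bounded net converging pointwise to $F$, then the dominated convergence theorem applied to the Cauchy formula gives $(F_\kappa)_k(X) \to F_k(X)$ for every $X \in \bD_Q$, and since the $(F_\kappa)_k$ are uniformly norm bounded by the same formula, this translates back to weak-* convergence of $P_k(F_\kappa)$.

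For part (2), positivity of the Fej\'er kernel together with $\frac{1}{2\pi}\int K_N = 1$ yields $\|\Sigma_N(F)\|_\infty \leq \|F\|_\infty$, so the Cesàro sequence is uniformly norm bounded. For each fixed $X \in \bD_Q$, one has $X \in r\bD_Q$ for some $r < 1$, so by \cite[Theorem 7.21]{KVV14} the nc Taylor series $\sum_k F_k(X)$ converges absolutely to $F(X)$; hence the ordinary partial sums converge in $M_n$, and so do their Cesàro averages $\Sigma_N(F)(X) \to F(X)$. Pointwise convergence plus uniform norm boundedness, combined with Theorem \ref{theorem:predual.H^infty(D_Q)}, then forces $\Sigma_N(F) \to F$ weak-*. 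The main technical point I anticipate is justifying the integral formulas at every $X \in \bD_Q$, not just deep in the interior: one must observe that the circle $\{e^{i\theta}X : \theta \in [0, 2\pi]\}$ sits entirely inside $\bD_Q$ since $\|Q(e^{i\theta}X)\| = \|Q(X)\| < 1$, so that termwise integration of the nc Taylor series is legitimate even for $X$ arbitrarily close to $\partial \bD_Q$.
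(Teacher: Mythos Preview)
Your argument is correct. The paper does not actually prove this proposition from scratch: it imports the result from \cite[Sections 3 and 4]{SampatShalit25}, where the analogous statements were established relative to the bounded pointwise topology, and then observes (citing \cite[Theorem V.5.6]{dunford1988linear}) that for linear maps continuity in that topology is equivalent to weak-* continuity, so the results transfer verbatim. Your direct proof via the Cauchy and Fej\'er integral formulas for $\theta \mapsto F(e^{i\theta}X)$ is the standard route and is presumably what underlies the cited reference; the Krein--Smulian reduction you invoke plays the same role as the paper's appeal to the Dunford--Schwartz theorem.
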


\begin{theorem}[Homogeneous NC Nullstellensatz] \label{theorem:hom.Nullstellensatz}
A weak-* closed ideal $\cJ \triangleleft H^\infty(\bD_Q)$ is homogeneous if and only if $\cJ = \overline{\cI}^{w^*}$ for some homogeneous ideal $\cI \triangleleft \bC \langle Z \rangle$.

Moreover, we have the following nc Nullstellensatz for homogeneous ideals $\cJ \triangleleft H^\infty(\bD_Q)$
\begin{equation*}
I\left(V_{\bD_Q}(\cJ)\right) = \overline{\cJ}^{w^*}.
\end{equation*}
\end{theorem}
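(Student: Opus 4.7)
The plan is to prove both assertions by leveraging Proposition \ref{prop:homogeneous.exp.properties} --- the weak-* continuity of the homogeneous projections $P_k$ and the recovery of any $F \in H^\infty(\bD_Q)$ as the weak-* limit of the Cesàro sums $\Sigma_k(F)$ of its homogeneous expansion --- together with the polynomial-level homogeneous nc Nullstellensatz already established in \cite[Proposition 3.4]{SampatShalit25}.

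For the first claim, I would treat each direction separately. For ``if'', given a homogeneous polynomial ideal $\cI \triangleleft \bC \langle Z \rangle$, I would show $\overline{\cI}^{w^*}$ is homogeneous by invoking weak-* continuity of each $P_k$: since $P_k(\cI) \subseteq \cI$ by homogeneity of $\cI$, the inclusion $P_k(\overline{\cI}^{w^*}) \subseteq \overline{\cI}^{w^*}$ passes to the weak-* closure, forcing every homogeneous component of an element of $\overline{\cI}^{w^*}$ to lie in $\overline{\cI}^{w^*}$. For the converse, starting from a weak-* closed homogeneous ideal $\cJ$, I would set $\cI := \cJ \cap \bC \langle Z \rangle$, which is a homogeneous polynomial ideal with $\overline{\cI}^{w^*} \subseteq \cJ$ automatic. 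For the reverse inclusion, any $F \in \cJ$ has all homogeneous components $F_k \in \cJ$ (by homogeneity of $\cJ$); each $F_k$ being a polynomial then lies in $\cI$, so the Cesàro sums $\Sigma_k(F)$ (finite linear combinations of the $F_j$'s) lie in $\cI$, and their weak-* limit $F$ belongs to $\overline{\cI}^{w^*}$.

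For the Nullstellensatz, I would first note that $I(V_{\bD_Q}(\cJ))$ is weak-* closed, being the intersection of the kernels of the point-evaluations $\Phi_X$, which are weak-* continuous by Theorem \ref{theorem:predual.H^infty(D_Q)}. The inclusion $\overline{\cJ}^{w^*} \subseteq I(V_{\bD_Q}(\cJ))$ then follows trivially. For the reverse, setting $\cI := \cJ \cap \bC \langle Z \rangle$ as before, the Cesàro argument yields simultaneously $\overline{\cJ}^{w^*} = \overline{\cI}^{w^*}$ and, via weak-* continuity of point evaluations, $V_{\bD_Q}(\cJ) = V_{\bD_Q}(\cI)$. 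Thus it suffices to show $I(V_{\bD_Q}(\cI)) \subseteq \overline{\cI}^{w^*}$. Given $F$ in the former, each homogeneous component $F_k$ is a polynomial that vanishes on the homogeneous variety $V_{\bD_Q}(\cI)$; by the polynomial homogeneous nc Nullstellensatz \cite[Proposition 3.4]{SampatShalit25}, each $F_k \in \cI$, whence the Cesàro sums $\Sigma_k(F) \in \cI$ and their weak-* limit $F$ lies in $\overline{\cI}^{w^*}$.

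The substantive obstacle in this scheme is the polynomial homogeneous nc Nullstellensatz invoked at the last step, but this is already available from \cite{SampatShalit25}. The remainder is essentially bookkeeping: verifying that the manipulations performed there in the bounded pointwise topology carry over verbatim to the canonical weak-* topology of Theorem \ref{theorem:predual.H^infty(D_Q)}, a compatibility addressed in the discussion preceding Proposition \ref{prop:homogeneous.exp.properties}.
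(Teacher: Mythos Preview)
Your proposal is correct and matches the paper's approach. In fact, the paper does not give a standalone proof of this theorem at all: it is stated without proof in Section~\ref{section:canon.w-*.top.H^infty(D_Q)} as a restatement of \cite[Proposition 3.4]{SampatShalit25}, transferred from the bounded pointwise topology to the weak-* topology via the equivalence discussed just before Proposition~\ref{prop:homogeneous.exp.properties}. Your proposal spells out in detail precisely how that transfer works --- weak-* continuity of the $P_k$, the Ces\`aro approximation, and reduction to the polynomial-level Nullstellensatz --- which is exactly the content of the cited result, so you are recapitulating rather than replacing the paper's argument.
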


\begin{theorem} \label{theorem:H(V).as.restr.algebra}
Let $\fV \subseteq \bD_Q$ be a nc subvariety. The map $R \colon H^\infty(\bD_Q) \to H^\infty(\fV)$ given by
\begin{equation}
R(F) = F \big\vert_{\fV}, \foral F \in H^\infty(\bD_Q)
\end{equation}
is a complete contraction with $\ker(R) = I(\fV)$.

Furthermore, the induced quotient map $\overline{R} \colon H^\infty(\bD_Q)/I(\fV) \to H^\infty(\fV)$ is a completely isometric isomorphism.
\end{theorem}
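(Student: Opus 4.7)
The theorem has three assertions: $R$ is a complete contraction, $\ker R = I(\fV)$, and $\overline{R}$ is a complete isometric isomorphism. The first two are essentially definitional. For the contraction: at each matrix level $n$ and amplification $k$, for any $[F_{ij}] \in M_k(H^\infty(\bD_Q))$ we have $\sup_{X \in \fV(n)} \|[F_{ij}(X)]\| \leq \sup_{X \in \bD_Q(n)} \|[F_{ij}(X)]\|$, which gives $\|R\|_{\mathrm{cb}} \leq 1$. The kernel identification is immediate: $R(F)=0$ iff $F|_\fV = 0$ iff $F \in I(\fV)$.

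For the substantive claim that $\overline{R}$ is a complete isometric isomorphism, my plan is to work in the weak-* framework established in Theorem \ref{theorem:predual.H^infty(D_Q)}. Applying its proof verbatim with the nc set $\fV$ replacing $\bD_Q$---the argument uses only that $\fV$ is closed under direct sums and similarities, which holds for any nc subvariety---realizes $H^\infty(\fV)$ as a dual space with canonical predual $\cX(\fV) := \ol{\spn}\{\varphi^\fV_{X,\eta} : X \in \fV(n),\, \eta \in M_n^*,\, n \in \bN\}$. Dually, since $I(\fV) = \bigcap_{X \in \fV,\, \eta} \ker \varphi^{\bD_Q}_{X,\eta}$ is an intersection of weak-* closed hyperplanes, it is weak-* closed, and so $H^\infty(\bD_Q)/I(\fV)$ is a dual space with predual $I(\fV)_\perp \subseteq \cX(\bD_Q)$. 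A standard bipolar argument identifies $I(\fV)_\perp$ with $\ol{\spn}\{\varphi^{\bD_Q}_{X,\eta} : X \in \fV(n),\, \eta \in M_n^*,\, n \in \bN\}$.

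Next, $R$ is weak-* continuous because each point evaluation at $X \in \fV$ pulls back via $R$ to a point evaluation in $\cX(\bD_Q)$. Hence $\overline{R}$ is the adjoint of a map $\overline{R}_* \colon \cX(\fV) \to I(\fV)_\perp$ which on generating point evaluations sends $\varphi^\fV_{X,\eta} \mapsto \varphi^{\bD_Q}_{X,\eta}$. By the bipolar description above, $\overline{R}_*$ is surjective onto $I(\fV)_\perp$, so $\overline{R}$ is a weak-* continuous injection with weak-* dense range, and to finish the proof we must upgrade this to a complete isometric isomorphism.

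The main obstacle is showing that $\overline{R}_*$ (equivalently $\overline{R}$) is completely isometric. By duality this is equivalent to the norm-preserving extension property: every $[f_{ij}] \in M_k(H^\infty(\fV))$ with $\|[f_{ij}]\|_\fV = 1$ lifts to some $[F_{ij}] \in M_k(H^\infty(\bD_Q))$ with $\|[F_{ij}]\|_\infty = 1$. This is the function-theoretic heart of the theorem and is beyond the reach of abstract duality alone, since a priori a bounded nc function on $\fV$ need not admit a norm-preserving bounded extension to $\bD_Q$. My plan is to invoke \cite[Theorem 4.8]{SampatShalit25}, whose proof secures this extension via a direct construction combining the matrix convexity of $\bD_Q$ (Proposition \ref{prop:boundedness.D_Q}) with a weak-* compactness argument on candidate lifts; once the extension is in hand, the complete isometry of $\overline{R}$ follows.
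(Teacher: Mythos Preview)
The paper does not actually prove this theorem; it merely recalls it from \cite[Theorem 4.8]{SampatShalit25}. So there is no ``paper's own proof'' to compare against, and your proposal should be judged on its own terms.

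Your proposal has a circularity problem. After setting up an elaborate dual-space framework, you identify the heart of the matter as the norm-preserving extension property and then propose to ``invoke \cite[Theorem 4.8]{SampatShalit25}'' for it. But that citation \emph{is} the theorem you are trying to prove---the paper explicitly identifies Theorem~\ref{theorem:H(V).as.restr.algebra} with \cite[Theorem 4.8]{SampatShalit25}. So as written, the argument is circular. Your parenthetical description of how that result is proved (``matrix convexity of $\bD_Q$ \ldots\ weak-* compactness argument on candidate lifts'') is also not how the extension is actually obtained: the norm-preserving extension of bounded nc functions from a relatively full nc subset (such as a nc subvariety) of $\bD_Q$ to all of $\bD_Q$ is the Ball--Marx--Vinnikov extension theorem \cite[Corollary 3.4]{BMV18}, which the present paper invokes elsewhere (e.g.\ in the proofs of Lemma~\ref{lemma:uniq.reps.over.V^p} and Theorem~\ref{thm:bdy.val.prin}). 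That theorem is proved via transfer-function realizations, not via matrix convexity and compactness.

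Once you have the BMV extension in hand, the entire weak-*/predual apparatus you built is unnecessary: given $[f_{ij}] \in M_k(H^\infty(\fV))$, extend each entry to $F_{ij} \in H^\infty(\bD_Q)$ so that the matrix norm is preserved (this is exactly what \cite[Corollary 3.4]{BMV18} gives, applied to the $M_k$-valued nc function $[f_{ij}]$), and the complete isometry of $\overline{R}$ follows immediately. The duality setup is correct but adds nothing.
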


It is easy to check that for a nc subvariety $\fV \subseteq \bD_Q$ we have $I(\fV) = (\cX(\fV))^\perp$, i.e., the annihilator of the subspace $\cX(\fV) \subseteq (H^\infty(\bD_Q))_*$ given by
\begin{equation*}
\cX(\fV) := \spn{\left\{ \varphi_{X, \eta} : X \in \fV(n) \text{ and } \eta \in M_n^*, \foral n \in \bN \right\}},
\end{equation*}
where $\varphi_{X,\eta}$ is as in \eqref{eqn:canonical.element.predual}. In particular, 
\[
\cX(\fV)^* \cong H^\infty(\bD_Q)/(\cX(\fV))^\perp = H^\infty(\bD_Q)/I(\fV) \cong H^\infty(\fV), 
\]
and this shows that $H^\infty(\fV)$ is also a dual space. 
It can be checked that $(H^\infty(\fV))_* = \cX(\fV)$ is the unique such predual for which point evaluations are weak-* continuous.

\begin{remark} \label{remark:predual.H(V)}
The keen reader may notice that the argument of Theorem \ref{theorem:predual.H^infty(D_Q)} works for any nc set in place of $\bD_Q$. However, we would have liked to highlight this connection between the predual of $H^\infty(\fV)$ with a subspace of $(H^\infty(\bD_Q))_*$ anyway.
\end{remark}

The weak-* topology on $H^\infty(\fV)$ can be characterized by the property that $f_\kappa \xrightarrow{w^*} f$ in $H^\infty(\fV)$ if and only if $f_\kappa(X) \to f(X)$ for all $X \in \fV$. We leave the proofs of this fact and the following corollary to the reader.

\begin{corollary} \label{corollary:R.is.w*.cont}
The maps $R$ and $\overline{R}^{-1}$ as in Theorem \ref{theorem:H(V).as.restr.algebra} are weak-* continuous.
\end{corollary}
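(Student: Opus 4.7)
My plan is to derive both continuity statements directly from the explicit predual descriptions displayed immediately above Corollary \ref{corollary:R.is.w*.cont}, so that the proof amounts to a short bookkeeping argument with no new ideas needed.

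For the weak-* continuity of $R$, I will invoke the standard criterion that a bounded linear map $T\colon \cY_1\to \cY_2$ between dual Banach spaces is weak-* continuous if and only if for every $\psi\in (\cY_2)_*$ the composition $\psi\circ T$ lies in $(\cY_1)_*$. With $\cY_1=H^\infty(\bD_Q)$, $(\cY_1)_*=\cX(\bD_Q)$, $\cY_2=H^\infty(\fV)$, $(\cY_2)_*=\cX(\fV)$, it suffices to verify the criterion on the spanning functionals $\varphi_{X,\eta}$ with $X\in\fV$ and $\eta\in M_n^*$. A direct unraveling gives $\varphi_{X,\eta}(R(F))=\eta(F|_\fV(X))=\eta(F(X))$, so $\varphi_{X,\eta}\circ R$ is the element of $\cX(\bD_Q)$ defined by the same formula, under the natural inclusion $\cX(\fV)\hookrightarrow \cX(\bD_Q)$. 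This settles weak-* continuity of $R$.

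For the weak-* continuity of $\overline{R}^{-1}$, I will use that $\overline{R}$ is nothing but the canonical identification between two realizations of the same dual space. Since $I(\fV)=\cX(\fV)^{\perp}$ is weak-* closed in $H^\infty(\bD_Q)$, the quotient $H^\infty(\bD_Q)/I(\fV)$ is itself a dual Banach space whose predual is the pre-annihilator $I(\fV)_\perp$; by the bipolar theorem this pre-annihilator is (the norm closure of) $\cX(\fV)$, the very same space that was identified as the predual of $H^\infty(\fV)$. So both sides of $\overline{R}$ are isometrically $\cX(\fV)^*$, and the two pairings coincide on generators: for $[F]\in H^\infty(\bD_Q)/I(\fV)$ and $\varphi_{X,\eta}\in\cX(\fV)$ with $X\in\fV$,
\[
\varphi_{X,\eta}([F]) \;=\; \eta(F(X)) \;=\; \varphi_{X,\eta}(F|_\fV) \;=\; \varphi_{X,\eta}\bigl(\overline{R}[F]\bigr).
\]
Hence $\overline{R}$ is a weak-* homeomorphism, and in particular $\overline{R}^{-1}$ is weak-* continuous.

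The only point that needs a little care is confirming that $\cX(\fV)$, defined in the excerpt as a mere linear span, may be replaced by its norm closure without changing any of the predual identifications; this is automatic because $\cX(\fV)$ and its closure have the same annihilator $I(\fV)$, and the closure equals $I(\fV)_\perp$ by the bipolar theorem. I do not anticipate any substantive obstacle beyond this mild verification — the whole corollary is essentially a functorial consequence of Theorem \ref{theorem:predual.H^infty(D_Q)} together with Theorem \ref{theorem:H(V).as.restr.algebra}.
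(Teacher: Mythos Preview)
Your proposal is correct. The paper does not actually supply a proof of this corollary --- it explicitly leaves it to the reader, immediately after recording the characterization of the weak-* topology on $H^\infty(\fV)$ via pointwise convergence --- so there is no detailed argument to compare against; your adjoint/predual bookkeeping is exactly the sort of routine verification the authors have in mind, and the small wrinkle you flag about $\cX(\fV)$ being defined as a span rather than a closed span is handled correctly.
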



\section{Classification of the algebras \texorpdfstring{$H^\infty(\fV)$}{H(V)}}\label{sec:class.alg.H(V)}

Let $\fV \subseteq \bD_Q$ be a subvariety of some nc operator ball. In this section, we classify $H^\infty(\fV)$ up to weak-* continuous and completely isometric isomorphisms. We start by studying the finite dimensional representations of $H^\infty(\fV)$.

\subsection{Finite dimensional representations}\label{subsec:fin.dim.reps}

Let $\Rep_n(H^\infty(\fV))$ be the space of all bounded $n$-dimensional representations of $H^\infty(\fV)$ and $\Rep^{cc}_n(H^\infty(\fV))$ be the space of all completely contractive $n$-dimensional representations of $H^\infty(\fV)$. We write
\begin{equation*}
\Rep(H^\infty(\fV)) = \bigsqcup_{n = 1}^\infty \Rep_n(H^\infty(\fV)),
\end{equation*}
and similarly
\begin{equation*}
\Rep^{cc}(H^\infty(\fV)) = \bigsqcup_{n = 1}^\infty \Rep^{cc}_n(H^\infty(\fV)).
\end{equation*}

We have a natural projection $\pi \colon \Rep^{cc}(H^\infty(\fV)) \to \overline{\bD_Q}$ given by
\begin{equation*}
\pi(\Phi) = (\Phi(Z_1 \vert_\fV),\dots,\Phi(Z_d \vert_\fV)) \foral \Phi \in \Rep^{cc}(H^\infty(\fV)).
\end{equation*}
That $\pi(\Phi)$ lies in $\overline{\bD_Q}$ was shown in \cite[Section 5]{SampatShalit25}. It is also clear from Theorem \ref{theorem:H(V).as.restr.algebra} and Corollary \ref{corollary:R.is.w*.cont} that
\begin{equation}\label{eqn:rep.on.V.in.terms.of.rep.on.D_Q}
\Rep(H^\infty(\fV)) \cong \left\{ \Phi \in \Rep(H^\infty(\bD_Q)) : I(\fV) \subseteq \ker \Phi \right\},
\end{equation}
and that \eqref{eqn:rep.on.V.in.terms.of.rep.on.D_Q} also holds for the corresponding classes of weak-* continuous and/or completely contractive representations.

Given any $n \in \bN$ and $X \in \bD_Q(n)$, recall from \eqref{eqn:def.Phi_X} the point evaluation map $\Phi_X \in \Rep^{cc}_n (H^\infty(\bD_Q))$ given by $\Phi_X \colon F \mapsto F(X)$. Note that $\pi(\Phi_X) = X$ in this case and, conversely, using Proposition \ref{prop:homogeneous.exp.properties} $(2)$, it is also clear that $\Phi_X$ is the unique weak-* continuous representation in $\pi^{-1}(X)$.
This leads us to the following result from \cite[Section 5]{SampatShalit25}, which is crucial in this discussion so we mention it here.

\begin{theorem}\label{thm:projn.of.reps.on.V}
$\pi$ is a continuous map that restricts to a homeomorphism between the weak-* continuous $n$-dimensional representations in $\pi^{-1}(\bD_Q)$ and $\fV(n)$.
\end{theorem}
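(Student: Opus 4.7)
The plan is to handle the continuity of $\pi$ first and then establish the homeomorphism in three smaller steps: the restriction lands in $\fV(n)$, it is bijective onto $\fV(n)$, and its inverse is continuous. I endow $\Rep^{cc}(H^\infty(\fV))$ with the topology of pointwise convergence (equivalently, point--weak-$*$ convergence, since the target $M_n$ is finite-dimensional). Then $\pi$ is nothing more than evaluation on the finitely many elements $Z_1|_\fV,\ldots,Z_d|_\fV$, so continuity of $\pi$ is immediate from the definition of this topology.

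The key lemma underlying the homeomorphism statement is that a weak-$*$ continuous completely contractive representation $\Phi$ of $H^\infty(\fV)$ with $\pi(\Phi) \in \bD_Q$ must coincide with the point evaluation at $X := \pi(\Phi)$. I would argue this as follows: composing $\Phi$ with the weak-$*$ continuous restriction map $R \colon H^\infty(\bD_Q) \to H^\infty(\fV)$ (Corollary~\ref{corollary:R.is.w*.cont}) yields a weak-$*$ continuous $\widetilde{\Phi} \in \Rep^{cc}_n(H^\infty(\bD_Q))$ that annihilates $I(\fV)$. The point evaluation $\Phi_X$ is also weak-$*$ continuous by Theorem~\ref{theorem:predual.H^infty(D_Q)}. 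Both $\widetilde{\Phi}$ and $\Phi_X$ agree on the generators $Z_j$, hence on every polynomial in $\bC\langle Z\rangle$; by Proposition~\ref{prop:homogeneous.exp.properties}(2) every $F \in H^\infty(\bD_Q)$ is a weak-$*$ limit of the (polynomial) Ces\`aro sums of its homogeneous expansion, so weak-$*$ continuity of both maps forces $\widetilde{\Phi} = \Phi_X$.

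With this identification the remaining pieces fall into place. Since $\widetilde{\Phi}$ annihilates $I(\fV)$, so does $\Phi_X$, meaning $f(X)=0$ for every $f \in I(\fV)$; since $V_{\bD_Q}(I(\fV)) = \fV$, this forces $X \in \fV(n)$, settling step one. The inverse candidate $X \mapsto \Phi_X$ is well defined on $\fV(n)$ (the point evaluation descends to $H^\infty(\fV)$ precisely because $X \in \fV$), is weak-$*$ continuous by Theorem~\ref{theorem:predual.H^infty(D_Q)}, and by the key lemma inverts $\pi$, settling bijectivity. Continuity of $X \mapsto \Phi_X$ reduces to showing that $f(X_\kappa) \to f(X)$ in $M_n$ for each $f \in H^\infty(\fV)$, which follows by lifting $f$ to some $F \in H^\infty(\bD_Q)$ via Theorem~\ref{theorem:H(V).as.restr.algebra} and using that $F$ is holomorphic, and therefore continuous, on the uniformly-open set $\bD_Q$ at the level $M_n^d$.

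The main conceptual hurdle is the identification $\widetilde{\Phi} = \Phi_X$: this is where the weak-$*$ continuity hypothesis is essential, and the argument relies crucially on the Ces\`aro summability from Proposition~\ref{prop:homogeneous.exp.properties}(2), which in turn rests on Theorem~\ref{theorem:predual.H^infty(D_Q)} identifying the canonical weak-$*$ topology with the topology of (bounded) pointwise convergence. Once that step is in hand, the remaining verifications are essentially bookkeeping.
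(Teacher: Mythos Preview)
Your proposal is correct and follows essentially the same approach as the paper. The paper does not give a self-contained proof of this theorem but rather cites \cite[Section 5]{SampatShalit25}, while outlining precisely the key step you identify: using Proposition~\ref{prop:homogeneous.exp.properties}(2) to conclude that $\Phi_X$ is the unique weak-$*$ continuous representation in $\pi^{-1}(X)$ for $X \in \bD_Q$, and then combining this with the identification \eqref{eqn:rep.on.V.in.terms.of.rep.on.D_Q} to descend to $\fV$.
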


For $\fV = \bD_Q = \fB_d$, it is known \cite[Theorem 3.2]{DP98b} that
\begin{equation*}
	\pi^{-1}(X) = \{\Phi_X\} \foral X \in \fB_d.
\end{equation*}
We are unable to prove this for a general $\bD_Q$ in place of the row ball $\fB_d$. 
We discuss the challenges in generalizing this result and also mention some alternatives to tackle this problem for a general $\bD_Q$ in Section \ref{sec:rep_ball}. Let $\Rep^{cc,w^*}(H^\infty(\fV))$ be the nc set consisting of weak-* continuous elements of $\Rep^{cc}(H^\infty(\fV))$ and denote
\begin{equation*}
\overline{\fV}^p = \pi(\Rep^{cc,w^*}(H^\infty(\fV))) \subset \overline{\bD_Q}.
\end{equation*}

\begin{lemma}\label{lemma:uniq.reps.over.V^p}
Given $X \in \overline{\fV}^p$, there is a unique $\Phi \in \Rep^{cc,w^*}(H^\infty(\fV))$ with $\pi(\Phi) = X$, namely $\Phi_X$.
\end{lemma}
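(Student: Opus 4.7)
The plan is to leverage weak-$*$ density of (restrictions of) nc polynomials in $H^\infty(\fV)$, so that any weak-$*$ continuous completely contractive representation of $H^\infty(\fV)$ is determined by its action on the coordinate restrictions $Z_j|_\fV$, hence by $\pi(\Phi) = X$.

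First I would take any $\Phi, \Psi \in \Rep^{cc,w^*}(H^\infty(\fV))$ with $\pi(\Phi) = \pi(\Psi) = X$. Then $\Phi(Z_j|_\fV) = X_j = \Psi(Z_j|_\fV)$ for each $j$, and because $\Phi, \Psi$ are algebra homomorphisms this forces $\Phi(p|_\fV) = p(X) = \Psi(p|_\fV)$ for every nc polynomial $p \in \bC\langle Z \rangle$ (note that $p(X)$ makes sense since polynomials are globally defined on $\bM^d$).

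Next I would argue that nc polynomials restricted to $\fV$ are weak-$*$ dense in $H^\infty(\fV)$. Given $f \in H^\infty(\fV)$, Theorem \ref{theorem:H(V).as.restr.algebra} supplies $F \in H^\infty(\bD_Q)$ with $R(F) = f$. By Proposition \ref{prop:homogeneous.exp.properties}(2), the Ces\`aro sums $\Sigma_k(F)$ are nc polynomials converging weak-$*$ to $F$ in $H^\infty(\bD_Q)$, and Corollary \ref{corollary:R.is.w*.cont} then yields $\Sigma_k(F)|_\fV \xrightarrow{w^*} f$ in $H^\infty(\fV)$. Since $\Phi, \Psi$ map into the finite-dimensional space $M_n$, in which all Hausdorff linear topologies coincide, their weak-$*$ continuity combined with the above gives
\[
\Phi(f) = \lim_k \Phi(\Sigma_k(F)|_\fV) = \lim_k \Sigma_k(F)(X) = \lim_k \Psi(\Sigma_k(F)|_\fV) = \Psi(f),
\]
so $\Phi = \Psi$.

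For existence and the naming $\Phi = \Phi_X$, the very definition $\overline{\fV}^p = \pi(\Rep^{cc,w^*}(H^\infty(\fV)))$ guarantees at least one such representation, and I would christen this unique $\Phi$ as $\Phi_X$; when $X \in \fV$, Theorem \ref{thm:projn.of.reps.on.V} (or directly Theorem \ref{theorem:predual.H^infty(D_Q)}) identifies it with the ordinary point evaluation $f \mapsto f(X)$. I do not foresee a serious obstacle here: the argument is a clean application of Ces\`aro-sum weak-$*$ approximation, with all the heavy lifting already carried out in Section \ref{section:canon.w-*.top.H^infty(D_Q)}.
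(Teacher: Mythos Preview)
Your proposal is correct and follows essentially the same route as the paper: show that any weak-$*$ continuous representation over $X$ agrees with $P \mapsto P(X)$ on polynomials, then use Ces\`aro approximation of an extension of $f$ to $\bD_Q$ to conclude weak-$*$ density of polynomials in $H^\infty(\fV)$ and hence uniqueness. Your version is slightly more explicit in invoking Corollary~\ref{corollary:R.is.w*.cont} to transport the weak-$*$ convergence of Ces\`aro sums from $H^\infty(\bD_Q)$ down to $H^\infty(\fV)$, which the paper leaves implicit.
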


\begin{proof}
If $\Phi \in \pi^{-1}(X)$, then note that
\begin{equation}\label{eqn:rep.act.on.poly}
\Phi(P) = P(X) \foral P \in \bC\langle Z \rangle.
\end{equation}
Thus, it suffices to show that $\bC \langle Z \rangle$ is weak-* dense in $H^\infty(\fV)$, since it would force the action of $\Phi$ via \eqref{eqn:rep.act.on.poly}. However, this is clear from Proposition \ref{prop:homogeneous.exp.properties} $(2)$ and the fact that any $f \in H^\infty(\fV)$ has a norm preserving extension $F \in H^\infty(\bD_Q)$ \cite[Corollary 3.4]{BMV18}.
\end{proof}

\begin{remark}\label{rem:pure.tuples}
It is known \cite[Remark 6.2]{SSS18} that $\overline{\fB_d}^p$ is the set of \emph{pure tuples} $X$, i.e.,
\begin{equation*}
\sotlim_{k \to \infty} \sum_{|\alpha| = k} (X^\alpha)^* = 0.
\end{equation*}
For a general $\bD_Q$, we cannot say much. 
However, we note that there are always points in $\overline{\bD_Q}^p$ living over the boundary $\partial \bD_Q$. Indeed, every \emph{$k$-nilpotent tuple} $X \in \overline{\bD_Q}$ lies in $\overline{\bD_Q}^p$ (recall that $X$ is said to be $k$-nilpotent if $X^\alpha = 0$ for all $|\alpha| \geq k$).
\end{remark}

We need the following useful description of $\overline{\fV}^p$ for arbitrary $\fV \subseteq \bD_Q$, which is obtained as in the discussion surrounding \eqref{eqn:rep.on.V.in.terms.of.rep.on.D_Q}.

\begin{lemma}\label{lemma:V^p.in.terms.of.D_Q^p}
For any subvariety $\fV \subseteq \bD_Q$, we can identify $\overline{\fV}^p$ as
\begin{equation}\label{eqn:V^p.in.terms.of.D_Q^p}
\overline{\fV}^p = \left\{ X \in \overline{\bD_Q}^p : I(\fV) \subseteq \ker \Phi_X \right\}.
\end{equation}
\end{lemma}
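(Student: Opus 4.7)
The plan is to leverage the identification \eqref{eqn:rep.on.V.in.terms.of.rep.on.D_Q} between representations of $H^\infty(\fV)$ and representations of $H^\infty(\bD_Q)$ whose kernel contains $I(\fV)$, combined with the uniqueness statement of Lemma \ref{lemma:uniq.reps.over.V^p} applied to the subvariety $\fV = \bD_Q$ of $\bD_Q$ itself (which gives: for every $X \in \overline{\bD_Q}^p$, the unique element of $\Rep^{cc,w^*}(H^\infty(\bD_Q))$ lying over $X$ is the distinguished $\Phi_X$).

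For the inclusion $\subseteq$, I would take $X \in \overline{\fV}^p$ and pick $\Phi \in \Rep^{cc,w^*}(H^\infty(\fV))$ with $\pi(\Phi) = X$. Since the restriction map $R \colon H^\infty(\bD_Q) \to H^\infty(\fV)$ is completely contractive (Theorem \ref{theorem:H(V).as.restr.algebra}) and weak-* continuous (Corollary \ref{corollary:R.is.w*.cont}) with $\ker R = I(\fV)$, the composition $\widetilde{\Phi} := \Phi \circ R$ lies in $\Rep^{cc,w^*}(H^\infty(\bD_Q))$, satisfies $I(\fV) \subseteq \ker \widetilde{\Phi}$, and obeys $\pi(\widetilde{\Phi}) = X$. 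In particular, $X \in \overline{\bD_Q}^p$. By the uniqueness clause of Lemma \ref{lemma:uniq.reps.over.V^p} applied to $\bD_Q$ itself, $\widetilde{\Phi}$ must coincide with $\Phi_X$, so $I(\fV) \subseteq \ker \Phi_X$.

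For the reverse inclusion, suppose $X \in \overline{\bD_Q}^p$ with $I(\fV) \subseteq \ker \Phi_X$. Then $\Phi_X$ descends to a representation on the quotient $H^\infty(\bD_Q)/I(\fV)$, which via $\overline{R}$ is completely isometrically and weak-* continuously isomorphic to $H^\infty(\fV)$ (Theorem \ref{theorem:H(V).as.restr.algebra} and Corollary \ref{corollary:R.is.w*.cont}). The induced representation lies in $\Rep^{cc,w^*}(H^\infty(\fV))$ and projects under $\pi$ to $X$, so $X \in \overline{\fV}^p$. The argument is essentially a bookkeeping exercise of chasing representations through the quotient $\overline{R}$, so no substantial obstacle arises; the only nontrivial ingredients are Corollary \ref{corollary:R.is.w*.cont} (to preserve weak-* continuity) and the already-proved Lemma \ref{lemma:uniq.reps.over.V^p} in the special case $\fV = \bD_Q$.
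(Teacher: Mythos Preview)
Your proposal is correct and follows precisely the route the paper indicates: the paper simply says the lemma ``is obtained as in the discussion surrounding \eqref{eqn:rep.on.V.in.terms.of.rep.on.D_Q}'', and you have unpacked exactly that identification (pullback and pushforward of representations through $R$ and $\overline{R}$), correctly invoking Lemma~\ref{lemma:uniq.reps.over.V^p} for $\fV = \bD_Q$ to ensure the lifted representation in the forward inclusion is $\Phi_X$ itself. There is nothing to add.
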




We also record the following important property of $\partial \ol{\fV}^p := \ol{\fV}^p \cap \partial \bD_Q$. 

\begin{corollary}\label{cor:no.scalar.point}
For any subvariety $\fV$ of a given nc operator ball $\bD_Q$,
\[
\partial \overline{\fV}^p(1)= \emptyset.  
\]
\end{corollary}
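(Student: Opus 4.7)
The plan is a proof by contradiction. Suppose there is $x \in \overline{\fV}^p(1)\cap\partial\bD_Q$, i.e., a scalar point with $\|Q(x)\|_{\cL(\cR,\cS)}=1$ that nevertheless admits a weak-* continuous completely contractive representation of $H^\infty(\fV)$. The idea is to slice $\bD_Q$ down to the one-variable disk and then invoke the classical fact that every weak-* continuous character $\chi$ of $H^\infty(\bD)$ satisfies $|\chi(z)|<1$.

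First I would lift to $\bD_Q$: by Lemma \ref{lemma:V^p.in.terms.of.D_Q^p} combined with Corollary \ref{corollary:R.is.w*.cont}, there is a weak-* continuous completely contractive character $\Phi_x\colon H^\infty(\bD_Q)\to\bC$ with $\Phi_x(Z_j)=x_j$, and by (the proof of) Lemma \ref{lemma:uniq.reps.over.V^p} this $\Phi_x$ evaluates polynomials at $x$. Next, since $\|Q(x)\|=1$, Hahn--Banach supplies a norm-one linear functional $\phi$ on the operator space $\cE$ with $\phi(Q(x))=1$. Because scalar-valued linear functionals on an operator space are completely bounded with $\|\phi\|_{\mathrm{cb}}=\|\phi\|$, the linear nc polynomial
\[
G(Z):=\sum_{j=1}^d \phi(Q_j)\,Z_j
\]
satisfies $\|G(Z)\|_{M_n}=\|(\phi\otimes\id_{M_n})(Q(Z))\|\le \|Q(Z)\|_{M_n(\cE)}<1$ for every $Z\in\bD_Q$. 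Thus $G\colon\bD_Q\to\bD$ is a bounded nc map with the crucial property $G(x)=\phi(Q(x))=1$.

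I would then use $G$ to pull back to the one-variable setting: define $\sigma\colon H^\infty(\bD)\to H^\infty(\bD_Q)$ by $\sigma(f)(Z):=f(G(Z))$, where the right-hand side is scalar holomorphic functional calculus applied to the matrix $G(Z)$ (whose spectrum lies in $\bD$). This $\sigma$ is a unital completely contractive algebra homomorphism. The step needing care is that $\sigma$ is weak-* continuous: this reduces to the weak-* continuity of the matricial point evaluations $f\mapsto f(T)$ on $H^\infty(\bD)$ for $T\in M_n$ with $\|T\|<1$, which can be established either via the Cauchy integral formula together with Vitali's theorem, or by realizing these evaluations as integration against the matricial Cauchy kernel lying in the predual $L^1(\bT)/H^1_0$; combined with the characterization of the weak-* topology by bounded pointwise convergence from Theorem \ref{theorem:predual.H^infty(D_Q)}, this gives weak-* continuity of $\sigma$. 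Consequently, $\chi:=\Phi_x\circ\sigma\colon H^\infty(\bD)\to\bC$ is a weak-* continuous character, and since $G$ is a polynomial we obtain $\chi(z)=\Phi_x(G)=G(x)=1$.

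Finally, I invoke the classical description of the canonical predual of $H^\infty(\bD)$: every weak-* continuous functional is represented by some $g\in L^1(\bT)/H^1_0$, so $\chi(z^n)=\widehat{g}(-n)\to 0$ by Riemann--Lebesgue; but multiplicativity forces $\chi(z^n)=1^n=1$ for all $n\ge 0$, a contradiction. Hence $\partial\overline{\fV}^p(1)=\emptyset$. The main technical nuisance is verifying weak-* continuity of $\sigma$ (equivalently, of the matricial holomorphic functional calculus on $H^\infty(\bD)$); beyond that the argument is a clean operator-space Hahn--Banach reduction to the classical disk.
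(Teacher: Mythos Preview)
Your argument is correct and is essentially the paper's own proof: both reduce to $\fV=\bD_Q$, separate the boundary scalar point from $\bD_Q(1)$ by a linear functional (you apply Hahn--Banach on $\cE$ and pull back through $Q$, the paper applies it directly on $\bC^d$; these are the same thing since $Q$ is a linear isomorphism onto $\cE$), use the resulting linear map $G\colon\bD_Q\to\fD_1$ to embed $H^\infty(\bD)$ into $H^\infty(\bD_Q)$, and then observe that composing with $\Phi_x$ would yield a weak-* continuous character of $H^\infty(\bD)$ at the boundary point $1$, which is impossible.

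The one place you work harder than necessary is the weak-* continuity of $\sigma$. In the paper's framework $H^\infty(\bD)$ is $H^\infty(\fD_1)$, and by Theorem~\ref{theorem:predual.H^infty(D_Q)} its canonical weak-* topology is \emph{defined} so that all matricial point evaluations $f\mapsto f(T)$, $\|T\|<1$, are continuous; hence $\sigma(f_\kappa)(X)=f_\kappa(G(X))\to f(G(X))=\sigma(f)(X)$ is immediate, with no need for Cauchy kernels or Vitali. Your Riemann--Lebesgue finish is a pleasant explicit witness for the contradiction the paper leaves implicit (equivalently: $z^n\to 0$ weak-* in $H^\infty(\fD_1)$ but $\chi(z^n)=1$).
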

\begin{proof}
Thanks to Lemma \ref{lemma:V^p.in.terms.of.D_Q^p}, it suffices to prove the claim for $\fV = \bD_Q$. 
Now, $B = \bD_Q(1)$ is a bounded convex symmetric domain in $\bC^d$. 
Thus, for every $x \in \partial B$, there exists a linear functional $\phi$ on $\bC^d$ such that $\phi(x) = 1$ and $|\phi(z)| < 1$ for all $z \in B$. 
For each $X = (x_{j,k}) \in \bD_Q(n)$, the matrix $(\phi(x_{j,k}))\in M_n$ is a strict contraction (see the proof of Proposition \ref{prop:boundedness.D_Q}). 
Therefore, the map $F \mapsto F \circ \phi$ is an injective, completely contractive and weak-* continuous homomorphism of $H^\infty(\bD) = H^\infty(\fD_1)$ into $H^\infty(\bD_Q)$. 
Now, if $x \in \ol{\bD_Q}^p$ then the character of point evaluation at $x$ is weak-* continuous, and this gives rise rise to a weak-* continuous point evaluation evaluation functional of $H^\infty(\bD)$ at the point $1$, which is impossible. 
\end{proof}

We next see how finite dimensional representations help us classify these algebras.

\subsection{Basic classification results}\label{subsec:basic.class.res}

For $i = 1,2$, let $\fV_i \subseteq \bD_{Q_i}$ be subvarieties of some nc operator balls. Then, every bounded homomorphism $\varphi \colon H^\infty(\fV_1) \to H^\infty(\fV_2)$ induces a map $\varphi^* \colon \Rep(H^\infty(\fV_2)) \to \Rep(H^\infty(\fV_1))$ via
\begin{equation}\label{eqn:map.between.rep.induced.by.hom}
\varphi^*(\Phi) = \Phi \circ \varphi \foral \Phi \in \Rep(H^\infty(\fV_2)).
\end{equation}
It is clear that whenever $\varphi$ is weak-* continuous and/or completely contractive, then $\varphi^*$ preserves weak-* continuous and/or completely contractive representations. This leads us to the following basic observation.

\begin{theorem}\label{thm:weak-*.cc.map.implies.nc.map}
For $i = 1,2$, let $\fV_i \subseteq \bD_{Q_i}$ be subvarieties of some nc operator balls. If $\varphi \colon H^\infty(\fV_1) \to H^\infty(\fV_2)$ is a weak-* continuous completely contractive homomorphism, then there is a nc map $F \colon \overline{\fV_2}^p \to \overline{\fV_1}^p$ such that
\begin{equation}\label{eqn:hom.induced.by.nc.map}
\varphi(f) = f \circ F \foral f \in H^\infty(\fV_1).
\end{equation}

Conversely, every nc map $F \colon \fV_2 \to \fV_1$ induces a weak-* continuous completely contractive homomorphism $\varphi \colon H^\infty(\fV_1) \to H^\infty(\fV_2)$.
\end{theorem}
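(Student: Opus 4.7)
The plan is to use the duality setup from Section~\ref{subsec:fin.dim.reps} in the obvious way: having realized points of $\overline{\fV}^p$ as weak-$*$ continuous completely contractive representations, a map $\varphi$ preserving both properties must dualize to a map between these representation spaces, and the resulting map between points will turn out to be nc.

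For the forward direction, fix $X \in \overline{\fV_2}^p$ and consider $\varphi^*(\Phi_X) = \Phi_X \circ \varphi \in \Rep(H^\infty(\fV_1))$. Since $\Phi_X$ is weak-$*$ continuous (because $X \in \overline{\fV_2}^p$) and completely contractive, and $\varphi$ is as well, the composition $\varphi^*(\Phi_X)$ lies in $\Rep^{cc,w^*}(H^\infty(\fV_1))$. By the very definition of $\overline{\fV_1}^p$, the point
\[
F(X) := \pi(\varphi^*(\Phi_X)) = \bigl(\varphi(Z_1|_{\fV_1})(X), \dots, \varphi(Z_d|_{\fV_1})(X)\bigr)
\]
belongs to $\overline{\fV_1}^p$. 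Now Lemma~\ref{lemma:uniq.reps.over.V^p} applied to $\fV_1$ gives that $\varphi^*(\Phi_X)$ is the unique weak-$*$ continuous completely contractive representation sitting over $F(X)$, namely $\Phi_{F(X)}$. Therefore, for every $f \in H^\infty(\fV_1)$ and every $X \in \fV_2 \subseteq \overline{\fV_2}^p$,
\[
\varphi(f)(X) = \Phi_X(\varphi(f)) = \varphi^*(\Phi_X)(f) = \Phi_{F(X)}(f) = f(F(X)),
\]
which is the desired identity $\varphi(f) = f \circ F$.

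It remains to check that $F$ is a nc map on $\overline{\fV_2}^p$. Gradedness is immediate from the fact that $\Phi_X$ has values in $M_n$ whenever $X \in M_n^d$. Respect for direct sums follows from $\Phi_{X \oplus Y}(g) = g(X) \oplus g(Y) = \Phi_X(g) \oplus \Phi_Y(g)$ applied componentwise to $g = \varphi(Z_j|_{\fV_1})$; respect for similarities follows analogously from $\Phi_{S \cdot X}(g) = S \cdot \Phi_X(g)$. These are routine verifications using only the definitions.

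For the converse, given a nc map $F \colon \fV_2 \to \fV_1$, set $\varphi(f) := f \circ F$. Then $\varphi(f)$ is nc as a composition of nc functions, and $\|\varphi(f)\|_{\fV_2} \leq \|f\|_{\fV_1}$; the same argument on matrix amplifications (using that composition with $F$ commutes with the $M_n$-valued structure) shows $\varphi$ is completely contractive. Finally, by the characterization of the weak-$*$ topology as pointwise convergence mentioned after Corollary~\ref{corollary:R.is.w*.cont}, if $f_\kappa \xrightarrow{w^*} f$ in $H^\infty(\fV_1)$ then $f_\kappa(F(X)) \to f(F(X))$ for every $X \in \fV_2$, so $\varphi(f_\kappa) \xrightarrow{w^*} \varphi(f)$ in $H^\infty(\fV_2)$. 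There is no serious obstacle here; the only subtlety worth flagging is that the identity $\varphi(f) = f\circ F$ is a priori only established on $\fV_2$ (not on the larger set $\overline{\fV_2}^p$), but this is exactly the content of the statement, so no further extension argument is needed.
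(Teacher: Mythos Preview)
Your proof is correct and follows essentially the same route as the paper's: define $F(X) = \pi(\varphi^*(\Phi_X))$, use Lemma~\ref{lemma:uniq.reps.over.V^p} to identify $\varphi^*(\Phi_X)$ with $\Phi_{F(X)}$, and read off $\varphi(f) = f \circ F$. You actually spell out a bit more than the paper does (the verification that $F$ is nc, and the weak-$*$ continuity in the converse), but the argument is the same.
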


\begin{proof}
Let $\varphi$ be as in the hypothesis and define $F \colon \overline{\fV_2}^p \to \overline{\fV_1}^p$ via
\begin{equation*}
F(X) = \pi(\varphi^*(\Phi_X)) \foral X \in \overline{\fV_2}^p.
\end{equation*}
Here, $\varphi^*$ is as in \eqref{eqn:map.between.rep.induced.by.hom} and $\Phi_X$ is as in \eqref{eqn:def.Phi_X}. 
Since $\varphi$ is weak-* continuous, $\varphi^*$ maps weak-* continuous representations to weak-* continuous representations. 
Thus, $\varphi^*(\Phi_X) = \Phi_{F(X)}$, because, by Lemma \ref{lemma:uniq.reps.over.V^p}, the representation $\Phi_{F(X)}$ is the unique weak-* continuous element in $\pi^{-1}(F(X))$. 
For any $f \in H^\infty(\fV_1)$ and $X \in \fV_1$, we get
\begin{equation*}
\varphi(f)(X) = (\Phi_X \circ \varphi)(f) = (\varphi^*(\Phi_X))(f) = \Phi_{F(X)}(f) = (f \circ F)(X),
\end{equation*}
and we therefore obtain \eqref{eqn:hom.induced.by.nc.map}.

The converse is clear, since we can define $\varphi$ via \eqref{eqn:hom.induced.by.nc.map}, which is clearly pointwise, hence weak-* continuous and completely contractive.
\end{proof}

We obtain the following basic classification theorem, which is an easy corollary of Theorem \ref{thm:weak-*.cc.map.implies.nc.map}. Recall that a \emph{nc biholomorphism} is a bijective nc holomorphic map between two nc sets.

\begin{corollary}\label{cor:basic.class.thm}
For $i = 1,2$, let $\fV_i \subseteq \bD_{Q_i}$ be subvarieties of some nc operator balls. If $\varphi \colon H^\infty(\fV_1) \to H^\infty(\fV_2)$ is a weak-* continuous completely isometric isomorphism, then there is a nc biholomorphism $F \colon \overline{\fV_2}^p \to \overline{\fV_1}^p$.

On the other hand, if there is a nc biholomorphism $F \colon \fV_2 \to \fV_1$, then there is a weak-* continuous completely isometric isomorphism $\varphi \colon H^\infty(\fV_1) \to H^\infty(\fV_2)$.

In either of these cases, the maps $\varphi$ and $F$ are related by \eqref{eqn:hom.induced.by.nc.map}.
\end{corollary}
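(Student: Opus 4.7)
The plan is to derive both implications directly from Theorem \ref{thm:weak-*.cc.map.implies.nc.map}, with a small amount of bookkeeping to match inverses on the two sides.

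For the forward direction, I first note that the inverse $\varphi^{-1}$ of a weak-* continuous completely isometric isomorphism is itself weak-* continuous: $\varphi$ restricts to a continuous bijection between the weak-* compact unit balls of the two dual spaces, hence a homeomorphism on bounded sets, and the Krein--Smulian theorem then promotes $\varphi^{-1}$ to a weak-* continuous map. Applying Theorem \ref{thm:weak-*.cc.map.implies.nc.map} to both $\varphi$ and $\varphi^{-1}$ (each is a weak-* continuous completely contractive homomorphism) produces nc maps $F \colon \overline{\fV_2}^p \to \overline{\fV_1}^p$ and $G \colon \overline{\fV_1}^p \to \overline{\fV_2}^p$ satisfying $\varphi(f) = f \circ F$ and $\varphi^{-1}(g) = g \circ G$ respectively.

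Next, I verify that $F$ and $G$ are mutually inverse by substituting the coordinate functions $Z_j\vert_{\fV_1}$ into the identity $f = \varphi^{-1}(\varphi(f)) = f \circ F \circ G$; read coordinate-wise this gives $F \circ G = \id$ on $\overline{\fV_1}^p$, and the symmetric argument gives $G \circ F = \id$ on $\overline{\fV_2}^p$. For holomorphicity on the nc interior $\fV_2 \subseteq \overline{\fV_2}^p$, the construction in the proof of Theorem \ref{thm:weak-*.cc.map.implies.nc.map} identifies the components of $F\vert_{\fV_2}$ with the bounded nc functions $\varphi(Z_j\vert_{\fV_1}) \in H^\infty(\fV_2)$, which are holomorphic by \cite[Theorem 7.21]{KVV14}; the same applies to $G$. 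Thus $F$ is a nc biholomorphism.

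The converse is the easier half: given a nc biholomorphism $F \colon \fV_2 \to \fV_1$, the second half of Theorem \ref{thm:weak-*.cc.map.implies.nc.map} directly produces a weak-* continuous completely contractive homomorphism $\varphi(f) := f \circ F$, and applying the same construction to $F^{-1}$ produces $\psi(g) := g \circ F^{-1}$. The compositions $\psi \circ \varphi$ and $\varphi \circ \psi$ agree with the identity on polynomials and hence on all of $H^\infty(\fV_i)$, by weak-* continuity combined with weak-* density of polynomials (which follows from Proposition \ref{prop:homogeneous.exp.properties}(2) together with the surjectivity and weak-* continuity of the restriction map, Corollary \ref{corollary:R.is.w*.cont}). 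Since $\varphi^{-1} = \psi$ is also completely contractive, $\varphi$ is completely isometric, and the relation \eqref{eqn:hom.induced.by.nc.map} holds by construction. The only point that requires care is the automatic weak-* continuity of $\varphi^{-1}$ in the forward direction; everything else is a direct application of Theorem \ref{thm:weak-*.cc.map.implies.nc.map}.
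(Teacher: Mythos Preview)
Your proof is correct and follows exactly the approach the paper intends: the paper gives no explicit argument, treating the result as immediate from Theorem~\ref{thm:weak-*.cc.map.implies.nc.map}, and your details (Krein--Smulian for the automatic weak-* continuity of $\varphi^{-1}$, applying the theorem to both $\varphi$ and $\varphi^{-1}$, and checking the induced nc maps are mutually inverse via the dual action on representations) are precisely what that omission hides. One small simplification in the converse: $\psi(\varphi(f)) = (f\circ F)\circ F^{-1} = f$ holds directly for every $f$, so the appeal to weak-* density of polynomials is unnecessary there.
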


Unlike the varieties $\fV_i$, the nc sets $\overline{\fV_i}^p$ are not natural geometric objects, and their intrinsic interest is questionable. 
Thus, we would like to strengthen the conclusion of the first part of the above theorem by showing that the biholomorphism $F \colon \overline{\fV_2}^p \to \overline{\fV_1}^p$ restricts to a biholomorphism of $\fV_2$ onto $\fV_1$. 
For this, we shall require a following generalization of the maximum modulus principle to the nc setting, which we shall refer to as {\em the boundary value principle}. 
This is a strengthening and unification of several similar results from an earlier work of ours (see Theorem 3.2, Remark 6.2 and Theorem 6.5 in \cite{SampatShalit25}). 

In the next lemma and theorem, we shall use the following notation. 
If $\cE$ is a an operator space (not necessarily finite dimensional), we shall write $\bB_\cE$ for the open nc unit ball
\[
\bB_\cE = \bigsqcup_{n=1}^\infty \bB_\cE(n) = \bigsqcup_{n=1}^\infty \{X \in M_n(\cE) : \|X\| < 1\}.
\]
We also define its closure $\ol{\bB}_\cE = \sqcup_{n=1}^\infty \ol{\bB_\cE(n)}$ and boundary $\partial \bB_\cE = \sqcup_{n=1}^\infty \partial \bB_\cE(n)$, where
\[
 \partial \bB_\cE(n) = \{X \in M_n(\cE) : \|X\| = 1\}.
\]

\begin{lemma}\label{lemma:BVP.preliminary}
Let $\Omega \subset \bM^d$ be a nc domain and $\cE$ be an operator space. Suppose $G \colon \Omega \to \ol{\bB}_\cE$ is a nc holomorphic map. If $G(X_0) \in \partial \bB_\cE$ for some $X_0 \in \Omega$, then $G(\Omega) \subseteq \partial\bB_\cE$.
\end{lemma}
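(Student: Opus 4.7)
The plan is to combine a level-wise classical maximum modulus principle with the direct-sum structure of nc functions in order to propagate boundary behaviour across all levels of $\Omega$.

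First I would establish the following level-wise version: if, for some $n \in \bN$ and some $X \in \Omega(n)$, we have $\|G(X)\| = 1$, then $\|G\| \equiv 1$ on $\Omega(n)$. Indeed, the restriction $G|_{\Omega(n)} \colon \Omega(n) \to M_n(\cE)$ is a classically holomorphic map from a connected open subset of the normed space $M_n(\bC^d)$ into the Banach space $M_n(\cE)$, with image in $\ol{\bB}_\cE(n)$. Choosing a functional $\phi \in M_n(\cE)^*$ of norm $1$ with $\phi(G(X)) = 1$, the scalar function $\phi \circ G|_{\Omega(n)}$ is holomorphic, bounded in modulus by $1$, and attains the value $1$ at $X$. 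The classical scalar maximum modulus principle, together with connectedness of $\Omega(n)$, then forces $\phi \circ G \equiv 1$, and hence $\|G\| \equiv 1$, on $\Omega(n)$.

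The main step is to bridge levels. The given point $X_0$ lives in one specific level $\Omega(n_0)$, whereas the conclusion must hold at every level. The key trick is amplification via direct sums: for an arbitrary $Y \in \Omega(m)$, both $X_0^{\oplus m}$ and $Y^{\oplus n_0}$ belong to $\Omega(m n_0)$ by closure of $\Omega$ under direct sums. Since $G$ respects direct sums, $\|G(X_0^{\oplus m})\| = \|G(X_0)^{\oplus m}\| = \|G(X_0)\| = 1$, so the level-wise principle applied at level $m n_0$ forces $\|G(Z)\| = 1$ for every $Z \in \Omega(m n_0)$; in particular, $\|G(Y^{\oplus n_0})\| = 1$, and since $\|G(Y^{\oplus n_0})\| = \|G(Y)^{\oplus n_0}\| = \|G(Y)\|$, we conclude $\|G(Y)\| = 1$, that is, $G(Y) \in \partial \bB_\cE$.

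I do not foresee a serious technical obstacle here: the Banach-valued maximum modulus reduces to its scalar counterpart via a supporting functional, and the amplification trick sidesteps the need for nc differentials or block-upper-triangular evaluation formulas that a less symmetric approach would otherwise require.
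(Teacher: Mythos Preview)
Your proof is correct and follows essentially the same route as the paper's: reduce to the scalar maximum modulus principle at a fixed level via a norming functional, then propagate to all levels by the direct-sum amplification trick. The only cosmetic difference is that you explicitly mention $Y^{\oplus n_0}$ alongside $X_0^{\oplus m}$, whereas the paper writes $X^{(n)}$ for the amplified point; the logic is identical.
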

\begin{proof}
If $G(X_0) \in \partial \bB_\cE(n)$ then $\|G(X_0)\| = 1$. 
Let $\Lambda \in M_n(\cE)^*$ be a functional of norm $1$ such that $\Lambda(G(X_0)) = 1$, and consider the holomorphic function $g = \Lambda \circ G \colon \Omega(n) \to \bC$. 
By the maximum modulus principle for scalar-valued multivariate holomorphic functions, $g$ is constant on the domain $\Omega(n)$. 
Since $\|\Lambda\| = 1$, it follows that $\|G(X)\|=1$ for all $X \in \Omega(n)$. 

We have shown that if $X_0 \in \Omega(n)$ and $G(X_0) \in \partial \bB_\cE(n)$, then $G(\Omega(n))\subseteq \partial \bB_\cE(n)$. Now a standard nc trick yields $G(\Omega(m))\subseteq \partial \bB_\cE(m)$ for all $m$, as follows. We need to show that $\|G(X)\| = 1$ for every $m$ and every $X \in \Omega(m)$. Let us write $X_0^{(m)} = \underbrace{X_0 \oplus \cdots \oplus X_0}_{m \textrm{ times}} \in \Omega(mn)$, and note that
\begin{equation*}
    \|G(X_0^{(m)})\| = \left\| \oplus_{k=1}^m G(X_0)\right\| = \| G(X_0) \| = 1.
\end{equation*}
Then, by the first paragraph of the proof, $G(\Omega(mn)) \subseteq \partial \bB_\cE$ and so $\left\|G(X^{(n)})\right\| = 1$. It follows that $\|G(X)\| = 1$ and we are done.
\end{proof}

\begin{theorem}[The boundary value principle]\label{thm:bdy.val.prin}
Let $\bD_{Q}$ and $\bD_{P}$ be nc operator balls and let $\fV \subseteq \bD_{Q}$ be a subvariety. For any nc map $F \colon \fV \to \overline{\bD_{P}}$, we have the following dichotomy:

\begin{enumerate}[leftmargin=*]
\item If $F(X_0) \in \bD_{P}$ for some $X_0 \in \fV$, then $F(\fV) \subset \bD_{P}$.

\item If $F(X_0) \in \partial \bD_{P}$ for some $X_0 \in \fV$, then $F(\fV) \subset \partial \bD_{Q}$.
\end{enumerate}
\end{theorem}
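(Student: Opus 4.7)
The plan is to prove the dichotomy by the contrapositive: assume $F(X_0) \in \partial \bD_P$ for some $X_0 \in \fV(n_0)$ and show $F(Y) \in \partial \bD_P$ for an arbitrary $Y \in \fV(m)$. As in Lemma~\ref{lemma:BVP.preliminary}, I would identify $\bD_P$ with the open nc unit ball $\bB_\cF$ of the operator space $\cF$ associated to $Q_P$, reducing the task to showing that $\|Q_P(F(X_0))\| = 1$ implies $\|Q_P(F(Y))\| = 1$.

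The key construction is a direct sum followed by a one-parameter similarity family. First I would form $\tilde X := X_0 \oplus Y \in \fV(n_0 + m)$; since $Q_P(F(\tilde X)) = Q_P(F(X_0)) \oplus Q_P(F(Y))$, this element has norm exactly $1$. Then, for $R \in M_{n_0 \times m}(\bC)$ in an open neighborhood $U$ of $0$, the similarity $\hat X(R) := S_R^{-1} \tilde X S_R$, with $S_R := \begin{pmatrix} I_{n_0} & R \\ 0 & I_m \end{pmatrix}$, remains in $\bD_Q$ by openness, and hence in $\fV$, since $\fV$ (being the zero set of nc functions on $\bD_Q$) is closed under similarities within $\bD_Q$. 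By the similarity-respecting property of $F$, the map $R \mapsto F(\hat X(R)) = S_R^{-1} F(\tilde X) S_R$ is holomorphic.

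Next, I would apply a maximum-modulus argument, mimicking the proof of Lemma~\ref{lemma:BVP.preliminary} at the single level $n_0 + m$ combined with Hahn--Banach, to the holomorphic map $g \colon U \to \overline{\bB_\cF}(n_0+m)$ defined by $g(R) := Q_P(F(\hat X(R)))$. Since $\|g(R)\| \leq 1$ throughout $U$ and $\|g(0)\| = 1$, this would force $\|g(R)\| \equiv 1$ on $U$. Setting $A := Q_P(F(X_0))$, $B := Q_P(F(Y))$, and $C := AR - RB$, a direct computation yields
\[
g(R) = \begin{pmatrix} A & C \\ 0 & B \end{pmatrix} \qand g(R) g(R)^* = \begin{pmatrix} AA^* + CC^* & CB^* \\ BC^* & BB^* \end{pmatrix},
\]
whence $\|g(R)\|^2 \geq \langle g(R) g(R)^* (v, 0), (v, 0)\rangle = \|A^* v\|^2 + \|C^* v\|^2$ for any unit vector $v$ in the target Hilbert space of an operator realization of $\cF$.

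The main obstacle is the final perturbative step: to produce some $R \in U$ and an approximately norming vector $v$ satisfying $\|C^* v\|^2 > 1 - \|A^* v\|^2$, which would yield $\|g(R)\| > 1$ and contradict $\|g(R)\| = 1$. Picking $v$ with $\|A^* v\|/\|v\|$ arbitrarily close to $\|A\| = 1$, I would rule out the degenerate possibility that $(AR - RB)^* v = 0$ for every $R$: letting $R$ range over the standard elementary matrices reduces this to the coordinate identities $B^*_{jj} v_i = (A^* v)_i$ for all indices $i, j$, where $B^*_{jj}$ is the $(j,j)$-diagonal block of $B^*$; summing the bounds $\|(A^* v)_i\| \leq \|B^*_{jj}\| \|v_i\| \leq \|B\| \|v_i\|$ then yields $\|A^* v\| \leq \|B\| \|v\|$, incompatible with $\|A^* v\|/\|v\|$ close to $1 > \|B\|$. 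The delicate bookkeeping in this coordinate analysis --- essentially a Schwarz--Pick-type inequality at the matrix-block level in the operator-space-valued setting --- is the main technical hurdle.
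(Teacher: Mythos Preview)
Your approach is genuinely different from the paper's and, once the final step is properly executed, it is correct. The paper composes $F$ with the defining polynomial $P$ and invokes the Ball--Marx--Vinnikov extension theorem \cite[Corollary 3.4]{BMV18} to extend $P \circ F$ from $\fV$ to a nc map on all of $\bD_Q$, after which Lemma~\ref{lemma:BVP.preliminary} applies verbatim. Your route avoids the extension theorem entirely by exploiting that varieties are closed under direct sums and similarities inside $\bD_Q$: the family $\hat X(R)$ connects $X_0$ and $Y$ within a single level of $\fV$, and you then argue by direct coordinate computation. This is more elementary, at the cost of the technical endgame you flag.

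Two remarks on the execution. First, the maximum-modulus step is superfluous: since $g(R)$ is block upper triangular with top-left block $A$, one always has $\|g(R)\| \geq \|A\| = 1$, so $\|g(R)\| \equiv 1$ follows trivially from $\|g(R)\| \leq 1$. The only inequality you actually use in the endgame is $\|g(R)\| \leq 1$, which comes directly from $F(\hat X(R)) \in \overline{\bD_P}$.

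Second, your final step as written has a gap. You aim to find $R \in U$ with $\|C^*v\|^2 > 1 - \|A^*v\|^2$, yet you only ``rule out'' the case $(AR-RB)^*v = 0$ for all $R$; this does not rule out $\|(AR-RB)^*v\|^2 \leq 1 - \|A^*v\|^2$ for all $R \in U$, which is the actual obstruction (and when $A$ does not attain its norm --- possible since $\cF$ may be infinite dimensional --- the right-hand side is strictly positive). The fix is to carry error terms: with $U$ containing a ball of radius $\delta$ (depending only on $\tilde X$, not on $v$), plugging $R = \delta E_{ij}$ into $\|(AR-RB)^*v_\epsilon\|^2 \leq \epsilon := 1 - \|A^*v_\epsilon\|^2$ gives $\|(A^*v_\epsilon)_i - (B^*)_{jj}(v_\epsilon)_i\| \leq \sqrt{\epsilon}/\delta$; summing in $i$ via the triangle inequality in $\ell^2$ yields $\|A^*v_\epsilon\| \leq \|B\| + \sqrt{n_0 \epsilon}/\delta$, and letting $\epsilon \to 0$ forces $\|B\| \geq 1$. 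So the hurdle you identify is real but surmountable along the very lines you indicate.
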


\begin{proof}
Let $P(Z) = \sum_{j=1}^{d_2} P_j Z_j \colon \bM^{d_2} \to \cL(\cU,\cV)_{nc}$ for some Hilbert spaces $\cU, \cV$ with
\[
\cE_2 := \operatorname{span}\left\{P_1, \ldots, P_{d_2}\right\} \subseteq \cL(\cU,\cV).
\]
Define $G_0 \colon \fV \rightarrow \cL(\cU,\cV)_{nc}$ by $G_0 = P \circ F$. 
Then $G_0$ is a nc map such that $\| G_0(X) \| \leq 1$ for all $X \in \fV$. 
As a nc variety, $\fV$ is a \emph{relatively full} nc subset of $\bD_{Q}$, so by the Ball-Marx-Vinnikov extension theorem \cite[Corollary 3.4]{BMV18}, there exists an extension of $G_0$ to a nc map $G \colon \bD_{Q} \to  \cL(\cU,\cV)_{nc}$ such that 
\[
\sup_{X \in \bD_{Q}} \| G(X) \| = \sup_{X \in \fV} \| G_0(X) \| \leq 1. 
\] 
Clearly, $G$ satisfies the assumptions of Lemma \ref{lemma:BVP.preliminary} with $\Omega = \bD_{Q}$ and $\cE = \cL(\cU,\cV)$. 
Therefore, if $F(X_0) \in \partial \bD_{P}$ for $X_0 \in \fV \subseteq \Omega$, then $G(X_0) \in \partial \bB_\cE$, and by the lemma $\|G(X)\| = 1$ for all $X \in \Omega$. 
In particular, $\|G(X)\| = 1$, which is the same as $F(X) \in \partial \bD_{P}$, for all $X \in \fV$.
\end{proof}

We now arrive at our central general classification result. 
Let us say that a nc variety $\fV$ {\em contains a scalar point} if $\fV(1) \neq \emptyset$. 
The class of varieties that contain a scalar point is rather broad, including, in particular, all homogeneous varieties. 
However, there are varieties with no scalar points (see \cite[Example 4.4]{Sha18}), and we do not know how to generalize the following theorem to that case.

\begin{theorem}\label{thm:gen.class.thm}
For $i = 1,2$, let $\fV_i \subseteq \bD_{Q_i}$ be subvarieties of some nc operator balls. 
If $\fV_2$ contains a scalar point, then the following are equivalent:
\begin{enumerate}[leftmargin=*]
\item There is a weak-* continuous completely isometric isomorphism $\varphi \colon H^\infty(\fV_1) \to H^\infty(\fV_2)$.
\item There is a nc biholomorphism $F \colon \overline{\fV_2}^p \to \overline{\fV_1}^p$.
\item There is a nc biholomorphism $F \colon \fV_2 \to \fV_1$.
\end{enumerate}

Moreover, every weak-* continuous and completely isometric isomorphism $\varphi$ is induced by a nc biholomorphism $F$ as in \eqref{eqn:hom.induced.by.nc.map}.
\end{theorem}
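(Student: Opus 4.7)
The plan is to reduce the theorem to Corollary \ref{cor:basic.class.thm}, which already yields (3) $\Rightarrow$ (1) $\Rightarrow$ (2), together with the Moreover clause \emph{except} for the assertion that the biholomorphism $F \colon \overline{\fV_2}^p \to \overline{\fV_1}^p$ produced from $\varphi$ actually restricts to a biholomorphism of the interior varieties $\fV_2 \to \fV_1$. This is precisely where the scalar-point hypothesis should enter, so the only substantive implication to prove is (2) $\Rightarrow$ (3), and this will simultaneously deliver the Moreover clause.

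To establish (2) $\Rightarrow$ (3), I would start from a nc biholomorphism $F \colon \overline{\fV_2}^p \to \overline{\fV_1}^p$. Since $\fV_2 \subseteq \overline{\fV_2}^p$ and $\overline{\fV_1}^p \subseteq \overline{\bD_{Q_1}}$, the restriction $F|_{\fV_2}$ is a nc holomorphic map from the subvariety $\fV_2 \subseteq \bD_{Q_2}$ into $\overline{\bD_{Q_1}}$. Applying the boundary value principle (Theorem \ref{thm:bdy.val.prin}) to this restriction forces the dichotomy $F(\fV_2) \subseteq \bD_{Q_1}$ or $F(\fV_2) \subseteq \partial \bD_{Q_1}$. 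To rule out the second case, I would fix a scalar point $x \in \fV_2(1)$ provided by hypothesis; since $F$ is graded, $F(x) \in \overline{\fV_1}^p(1)$, so if the image were contained in $\partial \bD_{Q_1}$ then $F(x)$ would lie in $\partial \overline{\fV_1}^p(1)$, directly contradicting Corollary \ref{cor:no.scalar.point}. Hence $F(\fV_2) \subseteq \bD_{Q_1}$.

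A short bookkeeping step then identifies $\bD_{Q_1} \cap \overline{\fV_1}^p$ with $\fV_1$: for any $X \in \bD_{Q_1}$ the point evaluation $\Phi_X$ is weak-* continuous (Theorem \ref{theorem:predual.H^infty(D_Q)}) and is the unique such representation lying over $X$ (Lemma \ref{lemma:uniq.reps.over.V^p}), so by Lemma \ref{lemma:V^p.in.terms.of.D_Q^p} we have $X \in \overline{\fV_1}^p$ if and only if $I(\fV_1) \subseteq \ker \Phi_X$, which is precisely the condition $X \in V_{\bD_{Q_1}}(I(\fV_1)) = \fV_1$. Thus $F(\fV_2) \subseteq \fV_1$, and in particular $\fV_1$ inherits the scalar point $F(x)$. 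Running the identical argument with $F^{-1}$ in place of $F$ yields $F^{-1}(\fV_1) \subseteq \fV_2$, so $F$ restricts to a nc biholomorphism $\fV_2 \to \fV_1$, proving (3). The Moreover clause is then immediate, since the formula $\varphi(f) = f \circ F$ supplied by Corollary \ref{cor:basic.class.thm} restricts verbatim to $\fV_2 \to \fV_1$. The central obstacle in this plan is isolating the role of the scalar-point hypothesis: the ``everything maps to the boundary'' alternative in the dichotomy of Theorem \ref{thm:bdy.val.prin} is the only pathology obstructing the descent of $F$ to the interior varieties, and it is precisely the level-$1$ statement of Corollary \ref{cor:no.scalar.point}, activated by a scalar point, that eliminates this alternative --- which both explains the mild hypothesis in the statement and suggests why its removal would require genuinely new ideas.
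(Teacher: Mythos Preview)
Your proposal is correct and follows essentially the same route as the paper: reduce to Corollary~\ref{cor:basic.class.thm} for $(3)\Rightarrow(1)\Rightarrow(2)$, then prove $(2)\Rightarrow(3)$ by applying the boundary value principle (Theorem~\ref{thm:bdy.val.prin}) to $F|_{\fV_2}$, ruling out the boundary alternative via the scalar-point hypothesis and Corollary~\ref{cor:no.scalar.point}, and finishing by symmetry. You have in fact made explicit the identification $\overline{\fV_1}^p \cap \bD_{Q_1} = \fV_1$, which the paper's proof uses tacitly when passing from $F(\fV_2)\subseteq\bD_{Q_1}$ to $F(\fV_2)\subseteq\fV_1$.
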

\begin{proof}
$(1) \Rightarrow (2)$ and $(3) \Rightarrow (1)$ are the conclusions of Corollary \ref{cor:basic.class.thm}. To show $(2) \Rightarrow (3)$, all we need to show is that every nc biholomorphism $F \colon \overline{\fV_2}^p \to \overline{\fV_1}^p$ maps $\fV_2$ into $\fV_1$. 

To this end, suppose $F \colon \overline{\fV_2}^p \to \overline{\fV_1}^p$ is a nc biholomorphism with $F(X_0) \in \partial \bD_{Q_1}$ for some $X_0 \in \fV_2$. The boundary value principle then shows that $F(\fV_2) \subset \partial \bD_{Q_1}$. But then this also holds at the first level, so the scalar points in $\fV_2$ are mapped into $\partial \overline{\fV_1}^p$, which by Corollary \ref{cor:no.scalar.point} is empty.
It follows therefore that $F(\fV_2) \subseteq \fV_1$ must hold. 
Noting that now we know that $\fV_1$ also contains a scalar point, the proof is complete by symmetry. 
\end{proof}

\subsection{Classification results for injective nc operator balls}\label{subsec:inj.nc.op.balls}

Fix $Q \colon \bM^d \to \cL(\cR,\cS)$ as in \eqref{eqn:op.val.lin.nc.poly}. We say that the corresponding nc operator ball $\bD_Q$ is \emph{injective} if the operator space
\begin{equation*}
\cE := \spn \{ Q_1, \dots, Q_d \} \subset \cL(\cR,\cS)
\end{equation*}
is injective in the sense of \cite{Rua89}. In particular, the injectivity of $\bD_Q$ is equivalent to the existence of a completely contractive projection $\Pi \colon \cL(\cR,\cS) \to \cE$. It is easy to see that both $\fB_d$ and $\fD_d$ are injective. Moreover, $\bD_Q \subset \bM^4$ corresponding to
\begin{equation*}
Q(Z) = \begin{bmatrix}
Z_1 & Z_2 \\
Z_4 & Z_3
\end{bmatrix}
\end{equation*}
is injective, but $\bD_Q \subset \bM^3$ corresponding to
\begin{equation*}
Q(Z) = \begin{bmatrix}
Z_1 & Z_2 \\
0 & Z_3
\end{bmatrix}
\end{equation*}
is not injective \cite[Examples 6.4 and 6.17]{SampatShalit25}. 

In \cite[Theorem 6.5]{SampatShalit25} we proved that every nc map $F_0 \colon \fX \to \ol{\bD_{Q_2}}$ from a relatively full nc subset $\fX$ of a nc operator ball $\bD_{Q_1}$ into the closure of an injective nc operator ball can be extended to a nc map $F \colon \bD_{Q_1} \to \ol{\bD_{Q_2}}$ (the assumption that $\bD_{Q_2}$ is injective cannot be dropped, in general). 
Combining this extension theorem with Theorem \ref{thm:gen.class.thm} we obtain a stronger classification result.

\begin{theorem}\label{thm:class.thm.inj.balls.and.hom.var}
For $i = 1,2$, let $\fV_i \subseteq \bD_{Q_i}$ be subvarieties of two injective nc operator balls. If $\fV_2$ contains a scalar point, then the following are equivalent:

\begin{enumerate}[leftmargin=*]
\item There is a weak-* continuous completely isometric isomorphism $\varphi \colon H^\infty(\fV_1) \to H^\infty(\fV_2)$.

\item There are nc maps $G \colon \bD_{Q_1} \to \bD_{Q_2}$ and $F \colon \bD_{Q_2} \to \bD_{Q_1}$ such that $G \vert_{\fV_1}$ is a bijection of $\fV_1$ onto $\fV_2$ and such that $G \vert_{\fV_1} = (F \vert_{\fV_2})^{-1}$.
\end{enumerate}
\end{theorem}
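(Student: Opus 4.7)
The strategy is to reduce everything to Theorem \ref{thm:gen.class.thm} by leveraging the extension result \cite[Theorem 6.5]{SampatShalit25} together with the boundary value principle (Theorem \ref{thm:bdy.val.prin}).

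For the direction $(2) \Rightarrow (1)$: given $G$ and $F$ as in $(2)$, their restrictions $G|_{\fV_1}$ and $F|_{\fV_2}$ are mutually inverse nc maps between $\fV_1$ and $\fV_2$; since nc maps on nc domains are automatically holomorphic, these restrictions yield a nc biholomorphism $\fV_1 \to \fV_2$. As $\fV_2$ is assumed to contain a scalar point, Theorem \ref{thm:gen.class.thm} then delivers the required weak-$*$ continuous completely isometric isomorphism.

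For the direction $(1) \Rightarrow (2)$, I would first invoke Theorem \ref{thm:gen.class.thm} to produce a nc biholomorphism $F_0 \colon \fV_2 \to \fV_1$ with inverse $G_0 \colon \fV_1 \to \fV_2$. Viewing $F_0$ as a nc map from the relatively full subvariety $\fV_2 \subseteq \bD_{Q_2}$ into $\ol{\bD_{Q_1}}$, the injectivity of $\bD_{Q_1}$ allows one to apply \cite[Theorem 6.5]{SampatShalit25} to extend $F_0$ to a nc map $F \colon \bD_{Q_2} \to \ol{\bD_{Q_1}}$. The symmetric application of the extension theorem, using the injectivity of $\bD_{Q_2}$, produces $G \colon \bD_{Q_1} \to \ol{\bD_{Q_2}}$ extending $G_0$.

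The remaining step, which I expect to be the main obstacle, is to upgrade the inclusions $F(\bD_{Q_2}) \subseteq \ol{\bD_{Q_1}}$ and $G(\bD_{Q_1}) \subseteq \ol{\bD_{Q_2}}$ to maps into the open balls, as required by $(2)$. Here the boundary value principle (Theorem \ref{thm:bdy.val.prin}) closes the gap: applied with $\fV = \bD_{Q_2}$ and target ball $\bD_{Q_1}$, the map $F$ sends any point $X_0 \in \fV_2$ to $F_0(X_0) \in \fV_1 \subseteq \bD_{Q_1}$, so the interior alternative of the dichotomy must prevail, forcing $F(\bD_{Q_2}) \subseteq \bD_{Q_1}$. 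The symmetric argument yields $G(\bD_{Q_1}) \subseteq \bD_{Q_2}$. By construction $F|_{\fV_2} = F_0$ and $G|_{\fV_1} = G_0$ are mutually inverse bijections of $\fV_2$ and $\fV_1$, which is exactly $(2)$. Everything else is a bookkeeping reduction to the previously established results.
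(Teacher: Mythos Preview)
Your proof is correct and matches the paper's approach exactly: the paper states that the theorem follows by ``combining this extension theorem with Theorem \ref{thm:gen.class.thm}'', and you have spelled out precisely this combination, including the use of the boundary value principle to push the extended maps from $\ol{\bD_{Q_i}}$ into $\bD_{Q_i}$. One minor remark: for $(2)\Rightarrow(1)$ you do not actually need the scalar-point hypothesis, since the implication ``nc biholomorphism $\Rightarrow$ weak-$*$ completely isometric isomorphism'' in Theorem \ref{thm:gen.class.thm} (equivalently, the second half of Corollary \ref{cor:basic.class.thm}) holds unconditionally.
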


It is now natural to ask if one can prove a stronger version of Theorem \ref{thm:class.thm.inj.balls.and.hom.var} in which the maps $G$ and $F$ in condition $(2)$ can be chosen to be inverses of each other on the nc operator balls. 
There is a simple geometric obstruction for such a result to hold.  
For instance, consider two copies $\fV_1$ and $\fV_2$ of $\fD_1$, the first considered as a subvariety in itself and the second sitting inside $\fD_2$ as the subvariety $\{(X,0) : X \in \fD_1\}$. 
Then, clearly, $H^\infty(\fV_1) \cong H^\infty(\fV_2)$ and the obvious map between $\fV_1$ and $\fV_2$ is a nc biholomorphism that induces this isomorphism but cannot be extended to a nc biholomorphism between the ambient balls $\fD_1$ and $\fD_2$.  

It turns out that if we assume that the varieties are homogeneous and the balls are injective, then we can obtain the stronger result under a reasonable minimality assumption that was introduced in \cite{SSS18} for the row ball, and was later shown to also work for any injective nc operator ball in \cite{SampatShalit25}. While this condition was introduced differently in both these references, we now understand that this condition can be simply understood with the following more convenient equivalent definition given by Shamovich \cite[Lemma 3.4]{Sha18} (see also \cite[Theorem 6.14]{SampatShalit25}).

\begin{definition}\label{def:matrix-span.variety}
A subvariety $\fV \subseteq \bD_Q$ of some nc operator ball is said to be \emph{matrix-spanning} if $I(\fV)$ contains no linear homogeneous polynomial.
\end{definition}

We can now combine our results from \cite{SampatShalit25} with Theorem \ref{thm:class.thm.inj.balls.and.hom.var} to obtain our sharpest classification theorem.

\begin{theorem}\label{thm:sharpest.classification.thm}
For $i = 1,2$, let $\fV_i \subseteq \bD_{Q_i}$ be matrix-spanning homogeneous subvarieties of some injective nc operator balls. Then, the following are equivalent.

\begin{enumerate}[leftmargin=*]
\item\label{it:1} There is a weak-* continuous completely isometric isomorphism $\varphi \colon H^\infty(\fV_1) \to H^\infty(\fV_2)$.

\item\label{it:2} There is a nc biholomorphism of $\fV_2$ onto $\fV_1$.

\item\label{it:3} There is a nc biholomorphism $F \colon \bD_{Q_2} \to \bD_{Q_1}$ such that $F(\fV_2) = \fV_1$.

\item\label{it:4}  There is a linear isomorphism $L \colon \bD_{Q_2} \to \bD_{Q_1}$ such that $L(\fV_2) = \fV_1$.

\item\label{it:5} There is a completely isometric isomorphism $\widetilde{\varphi} \colon A(\fV_1) \to A(\fV_2)$.
\end{enumerate}

Moreover, every isomorphism $\varphi$ as above is induced by composition with such a biholomorphism $F$ as in \eqref{eqn:hom.induced.by.nc.map}.
\end{theorem}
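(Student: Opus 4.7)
The plan is to add the new condition (1) into a cycle of equivalences that, in the present setting of matrix-spanning homogeneous varieties in injective nc operator balls, was essentially already established for the subalgebras $A(\fV)$ in \cite{SampatShalit25}. Specifically, the implications (2) $\Leftrightarrow$ (3) $\Leftrightarrow$ (4) $\Leftrightarrow$ (5) are the substance of that earlier classification, so the work here is to splice (1) into this chain using Theorem \ref{thm:gen.class.thm} together with the trivial observation that a linear isomorphism between the balls gives a weak-* completely isometric isomorphism of the $H^\infty$ algebras.

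I would first dispose of the easy implications. The chain (4) $\Rightarrow$ (3) $\Rightarrow$ (2) is immediate: a linear isomorphism is a nc biholomorphism, and any nc biholomorphism of the balls that maps $\fV_2$ to $\fV_1$ restricts to a nc biholomorphism of the varieties. From (4), the formulas $\widetilde{\varphi}(f) := f \circ L\vert_{\fV_2}$ and $\varphi(f) := f \circ L\vert_{\fV_2}$ supply the maps in (5) and (1) respectively; in the latter, weak-* continuity is because the pre-adjoint sends point evaluation functionals to point evaluation functionals (see Corollary \ref{corollary:R.is.w*.cont}), and complete isometry is because $L\vert_{\fV_2}$ is a bijection onto $\fV_1$. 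Next, (1) $\Rightarrow$ (2) is a direct application of Theorem \ref{thm:gen.class.thm}, since homogeneity of $\fV_2$ ensures $0 \in \fV_2(1)$, which is the required scalar point.

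The substantive content is therefore (2) $\Rightarrow$ (4) (equivalently (5) $\Rightarrow$ (4)), both of which are available in \cite{SampatShalit25}. Given a nc biholomorphism $F \colon \fV_2 \to \fV_1$ of homogeneous matrix-spanning varieties, one first applies the Ball-Marx-Vinnikov extension theorem in the injective setting (Theorem 6.5 of \cite{SampatShalit25}) to $F$ and $F^{-1}$ to obtain nc maps between $\bD_{Q_2}$ and $\bD_{Q_1}$; the boundary value principle (Theorem \ref{thm:bdy.val.prin}), combined with the fact that $0 \in \fV_i$, forces these extensions to land inside the open balls; and then a Cartan-type uniqueness argument combined with the matrix-spanning hypothesis forces the extensions to be linear and mutually inverse, producing (4). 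The implication (5) $\Rightarrow$ (4) is exactly the completely isometric classification of $A(\fV)$ established in \cite{SampatShalit25}. The main obstacle would have been the linearization step, but this was already handled in the earlier paper; our new contribution is inserting (1) into the cycle via Theorem \ref{thm:gen.class.thm}. The "moreover" clause follows because Theorem \ref{thm:gen.class.thm} asserts that $\varphi(f) = f \circ F$ for some nc biholomorphism $F \colon \fV_2 \to \fV_1$, and under (4) one may take $F = L\vert_{\fV_2}$.
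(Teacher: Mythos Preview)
Your proposal is correct and follows essentially the same route as the paper: the equivalences among (2)--(5) are imported from \cite{SampatShalit25} (in particular Theorems 6.12 and 6.21 there), and (1) is spliced into the cycle via Theorem \ref{thm:gen.class.thm}, using that $0 \in \fV_2(1)$ for homogeneous varieties. The only minor imprecision is in your ``moreover'' clause: rather than saying ``under (4) one may take $F = L\vert_{\fV_2}$,'' the point is that the biholomorphism $F \colon \fV_2 \to \fV_1$ produced by Theorem \ref{thm:gen.class.thm} itself extends (by \cite[Theorem 6.12]{SampatShalit25}) to a biholomorphism of the ambient balls.
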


\begin{proof}
By \cite[Theorem 6.21]{SampatShalit25}, conditions (\ref{it:3}), (\ref{it:4}), and (\ref{it:5}) are equivalent. 
By Theorem \ref{thm:class.thm.inj.balls.and.hom.var}, conditions (\ref{it:1}) and (\ref{it:2}) are equivalent (homogeneous varieties contain the scalar point $0$). 
Now, (\ref{it:3}) clearly implies (\ref{it:2}). 
On the other hand, by \cite[Theorem 6.12]{SampatShalit25}, a biholomorphism between two matrix spanning homogeneous subvarieties of injective operator balls extends to a biholomorphism between the balls, thus (\ref{it:2}) implies (\ref{it:3}). 
Altogether, we have shown that all conditions are equivalent. 
\end{proof}

The condition that two nc domains are biholomorphic is typically very rigid. 
We take the opportunity to refer the reader to the deep paper \cite{AHKM} (see also the references therein) on the possible biholomorphic maps between free spectrahedra. 

One may wonder whether it is necessary to assume in the above theorem that the varieties are homogeneous. 
Following earlier work in \cite{Sha18}, Belinschi and Shamovich obtained a variant of the above theorem in the case of the row ball, i.e., $\bD_{Q_1} = \bD_{Q_2} = \fB_d$ with no assumption on the varieties \cite[Theorem 5.7]{BS+}.
However, their methods of proof rely on the geometry of $\fB_d$ in an essential way. 
And indeed, the following example shows that for general operator balls the theorem might fail if the varieties are not both homogeneous. 

\begin{example}\label{ex:cant.drop.homo}
Consider the following two varieties in $\fD_2$:
\[
\fV_1 = \{(X,X^2) : X \in \fD_1\},
\]
and 
\[
\fV_2 = \{(X,X^3) : X \in \fD_1\}.
\]
These varieties are matrix spanning and are biholomorphic via the composition $(X,X^2) \mapsto X \mapsto (X,X^3)$, and it is clear that 
\[
H^\infty(\fV_1) \cong H^\infty(\fV_2) \cong H^\infty(\fD_1) = H^\infty(\bD)
\]
completely isometrically and weak-* continuously. 
However, the isomorphism is not induced by an automorphism of $\fD_2$, because $\Aut(\fD_2) = \Aut(\bD_2)$ and no such automorphism maps one of these varieties onto the other (see \cite[Example 3.4]{MT16}).
\end{example}

\begin{remark}
The above example and the preceding remark display another intriguing difference between the nc ball and the nc polydisk. 
Whereas \cite[Theorem 5.7]{BS+} shows that isomorphisms between the quotients $H^\infty(\fB_d)/I(\fV)$ (for $\fV \subseteq \fB_d$ matrix spanning) can be lifted to an automorphism of $H^\infty(\fB_d)$, this is not so for analogous quotients of $H^\infty(\fD_d)$. 
\end{remark}


\section{Representations fibered over the ball: difficulties and ideas}\label{sec:rep_ball}

Recall that we have a projection $\pi \colon \Rep^{cc}(H^\infty(\bD_Q)) \to \ol{\bD_Q}$ given by
\[
\pi(\Phi) = \Phi(Z) =  (\Phi(Z_1), \ldots, \Phi(Z_d)). 
\]
In the case of the row ball $\bD_Q = \fB_d$, it is known that for every $X \in \bD_Q$ the fiber over $X$ is a singleton, namely
\be\label{eq:pi_inv}
\pi^{-1}(X) = \{\Phi_X\}  \, \textrm{ for all } X \in \fB_d,
\ee
where $\Phi_X$ is the evaluation representation $\Phi_X(f) = f(X)$, and that $\pi^{-1}$ restricts to a homeomorphism from $\bD_Q(k)$ into the space of completely contractive and weak-* continuous $k$-dimensional representations; this is due to Davidson and Pitts who formulated it in the language of the {\em noncommutative Toeplitz algebra} \cite[Theorem 3.2]{DP98b} (it is crucial here that $d<\infty$ \cite{DPErr}; see also \cite[Theorem 6.1]{SSS18} for a formulation in the language of nc functions). 
This readily implies that for a nc subvariety $\fV \subset \fB_d$, a completely contractive finite dimensional representation $\Phi$ of $H^\infty(\fV)$ is weak-* continuous if $\pi(\Phi) = \Phi(Z) = X \in \fV$, in which case $\Phi = \Phi_X$ \cite[Theorem 6.3]{SSS18}. 
These observations lie at the heart of the classification of of the algebras of the form $H^\infty(\fV)$, where $\fV$ is a subvariety of the row ball $\fB_d$, up to completely isometric isomorphism \cite{SSS18} and up to completely bounded isomorphism \cite{SSS20}.

Significantly, in the completely isometric category, as well as in the completely bounded category when restricting to homogeneous subvarieties, one did not require to assume that an isomorphism is weak-* continuous in order to conclude that the subvarieties are nc biholomorphic. 
In fact, it is shown that a completely contractive isomorphism $\alpha \colon H^\infty(\fV) \to H^\infty(\fW)$ is implemented as a composition operator $\alpha(f) = f\circ G$, where $G \colon \fW \to \fV$ is a nc biholomorphism, and it follows that $\alpha$ must therefore be a weak-* homeomorphism --- automatically. 
However, in the setting of the current paper we could not solve the problem whether \eqref{eq:pi_inv} holds in the generality of nc operator balls $\bD_Q$ in place of $\fB_d$.

Our goal in this section is to discuss possible approaches to proving \eqref{eq:pi_inv} in the generality that we are working in this paper, to explain what difficulties arise when working outside of the relatively well-understood and well-behaved row ball $\fB_d$, and to obtain partial positive results when possible.

\subsection{Realizations-based approach}

By a theorem of Agler and McCarthy \cite{AM15a} (which was extended to greater generality by Ball, Marx and Vinnikov \cite{BMV18}), every $f \in H^\infty(\bD_Q)$ of norm $1$ can be expressed concretely via the following {\em realization formula}
\be\label{eq:realizationX}
f(X) = A \otimes {I}_{n} + (B \otimes {I}_{n})(I_M \otimes Q(X))\left[1 - (D \otimes I_n)(I_M \otimes Q(X)) \right]^{-1} (C \otimes I_n)
\ee
for all $X \in \bD_Q(n)$. 
Here, $1$ denotes the identity operator $1 = I_M \otimes I_H \otimes I_n$ and $A \in \bC$, $B \in \cL(M \otimes H, \bC)$, $C \in \cL(\bC, M \otimes H)$ and $D \in \cL(M\otimes H)$ are such that the operator 
\be\label{eq:V}
V = \left[\begin{matrix}A & B \\ C & D  \end{matrix}\right] \colon \bC \oplus (M \otimes H) \to \bC \oplus (M \otimes H)
\ee
is an isometry. 
Let us assume for this discussion that $M$ and $H$ are finite dimensional. 
Thus, as a nc function we can write formally
\be\label{eq:realization}
f(Z) = A + B(I_M \otimes Q(Z))\left[1 - D(I_M \otimes Q(Z)) \right]^{-1} C. 
\ee
Let $X \in \bD_Q$ and let $\Phi$ be a representation such that $\pi(\Phi) = \Phi(Z) = X$. 
Since $Z \mapsto B(I_M \otimes Q(Z))$ is a $\cL(H)$-valued linear polynomial, we have 
\[
\Phi(B(I_M \otimes Q(Z))) = (B \otimes I_n) (I_M \otimes Q(X)). 
\]
Assume for a moment that the $\cL(\bC,M \otimes H)$-valued nc function given by 
\[
Z \mapsto \left[1 - D(I_M \otimes Q(Z)) \right]^{-1} C
\]
is bounded on $\bD_Q$. 
Since $1 - D(I_M \otimes Q(Z))$ is a linear operator-valued polynomial which is invertible on $\bD_Q$, and since representations take inverses to inverses, we have 
\[
\Phi(\left[1 - D(I_M \otimes Q(Z)) \right]^{-1} C) = \left[1 - (D \otimes I_n)(I_M \otimes Q(X)) \right]^{-1} (C \otimes I_n). 
\]
Thus, {\em if} $Z \mapsto \left[1 - D(I_M \otimes Q(Z)) \right]^{-1} C$ is bounded on $\bD_Q$, then we can use the multiplicity of $\Phi$ to obtain
\begin{align*}
\Phi(f) &= \Phi(B(I_M \otimes Q(Z))) \Phi(\left[1 - D(I_M \otimes Q(Z)) \right]^{-1} C) \\
&= (B \otimes I_n) (I_M \otimes Q(X))\left[1 - (D \otimes I_n)(I_M \otimes Q(X)) \right]^{-1} (C \otimes I_n) \\
&= f(X), 
\end{align*}
that is, $\Phi(f) = \Phi_X(f)$. 

To summarize: {\em If $\dim H < \infty$ and if the function $f \in H^\infty(\bD_Q)$ has a realization \eqref{eq:realization} with $\dim M < \infty$, and if, in addition, the nc function $Z \mapsto \left[1 - D(I_M \otimes Q(Z)) \right]^{-1} C$ is bounded on $\bD_Q$, then $\Phi(f) = f(X) = \Phi_X(f)$ for every $X \in \bD_Q$ and every $\Phi \in \pi^{-1}(X)$. }

Unfortunately, even when $M$ and $H$ are finite dimensional it may happen that the nc function $Z \mapsto \left[1 - D(I_M \otimes Q(Z)) \right]^{-1} C$ is unbounded, as the following example shows. 

\begin{example}\label{ex:unbdd}
Consider the case where $d = 2$, $H = \bC^2$, $Q(Z) = \diag(Z_1, Z_2)$, so that $\bD_Q = \fD_2$ is the nc bidisk. 
Let $f$ be the function \eqref{eq:realization} where we take $M = \bC$ and 
\[
A = 0, 
B = \left[ \begin{matrix} 1/\sqrt{2} & 1/\sqrt{2} \end{matrix} \right], 
C = \left[ \begin{matrix} 1/\sqrt{2} \\ 1/\sqrt{2} \end{matrix} \right], 
D = \left[ \begin{matrix} 1/2 & -1/2 \\-1/2 & 1/2 \end{matrix} \right]. 
\]
One readily computes that for a scalar point $x = (x_1, x_2) \in \bD^2 = \fD_2(1)$, 
\[
f(x) = \frac{2x_1 x_2 - x_1 - x_2}{x_1 + x_2 -2}, 
\]
a bounded rational function on $\bD^2$, which can be verified directly and also follows from the fact that the matrix $V$ in \eqref{eq:V} is a unitary. 
However, 
\[
\left[1 - D Q(x) \right]^{-1} C = \sqrt{2} \left[ \begin{matrix} \frac{x_2 - 1}{x_1 + x_2 - 2}  \\ \\ \frac{x_1 - 1}{x_1 + x_2 - 2}  \end{matrix} \right]
\]
which is unbounded on $\bD^2$. 
\end{example}

\begin{remark}
The bad behavior displayed in the previous example cannot occur, at least not for rational functions, in the setting of the row ball $\fB_d$. 
Indeed, Jury, Martin and Shamovich showed that a bounded nc rational function on $\fB_d$ has a realization where the block $D$ has ``spectral radius" strictly less than one, whence the inverse can be computed as a norm convergent Neumann series and, in particular, does not lead to anything unbounded (see \cite[Theorem A]{JMS21}. Strictly speaking, the results in \cite{JMS21} use the so-called {\em descriptor} realization, whereas we here use the so-called {\em Fornasini–Marchesini} realization; however these are equivalent, see e.g. Section 3 in \cite{AMS24} for an explanation). 
\end{remark}

\subsection{Taylor-Taylor expansions-based approach}

For every $f \in H^\infty(\bD_Q)$ and $n \in \bN$ the Taylor-Taylor (TT) expansion of order $N-1$ of $f$ centered at $0$ reads
\[
f(X) = \sum_{k=0}^{N-1} \Delta^k f(0^{(n)}, \ldots, 0^{(n)})[X, \ldots, X] + \Delta^{N}f(0^{(n)}, \ldots, 0^{(n)}, X)[X, \ldots, X], 
\]
for all $X \in \bD_Q(n)$ (see \cite[Theorem 4.1]{KVV14}). 
For every $k = 0, 1, \ldots, N$ the term $\Delta^k f$ is the $k$-th order difference-differential operator applied to $f$, which is a nc function of order $k$ obtained by evaluating $f$ on certain upper triangular matrices. 

Since $0^{(n)}$ is a scalar point, the properties of the higher order nc difference-differential operators imply that the TT expansion can be written in the more convenient and transparent form 
\be\label{eq:TT}
f(Z) = \sum_{|w|<N} Z^w \Delta^{w} f(0,\ldots, 0) + \sum_{|w|=N} Z^w \Delta^{w} f(0,\ldots, 0,Z), 
\ee
where $Z^w$ are just the monomials $Z_{w_1} Z_{w_2} \cdots Z_{w_k}$ (with $k = |w|$), $\Delta^{w} f(0,\ldots, 0)$ are scalar coefficients and $\Delta^{w} f(0,\ldots, 0,Z)$ is a nc function in $Z$ (see \cite[Corollary 4.4]{KVV14} for details). 
We note that the terms $Z^w \Delta^{w} f(0,\ldots, 0,Z)$ are simply the monomial $Z^w$ multiplied from the right by a certain nc function which just happens to arise as a certain difference-differential operator $\Delta^w$ of order $N$ applied to $f$ and evaluated at the points $0, \ldots, 0, Z$. 

Now, let $X \in \bD_Q$ and let $\Phi \in \pi^{-1}(X)$. 
If, for all words $w$ of length $N$, the function $\Delta^wf(0,\ldots,0,Z)$ happens to be in $H^\infty(\bD_Q)$ then we can apply $\Phi$ to \eqref{eq:TT} to obtain
\[
\Phi(f) = \sum_{|w|<N} X^w \Delta^{w} f(0,\ldots, 0) + \sum_{|w|=N} X^w \Phi(\Delta^{w} f(0,\ldots, 0,Z)). 
\]
One can then try to control the remainder term, using the fact that $X$ is an interior point so that $\|X^w\|$ should be small for large $|w|$ and that $\Phi$ is completely contractive. 
The question then arises, whether the terms $\Delta^w f (0,\ldots, 0,Z)$ can be bounded, say by some constant times $\|f\|$. 
Somewhat surprisingly, it turns out that this is not the case. 

\begin{example}\label{ex:polydisk_blowup}
Let $f$ be the bounded nc function on $\fD_2$ as in Example \ref{ex:unbdd}. 
We shall show that already its first difference-differential $\Delta f(0,X)$ is unbounded on $\fD_2$. 
In fact, $\Delta f(0,X)$ is unbounded on $\bD^2$. 
To this end, we apply $f$ to the $2$-tuple $X = (X_1, X_2) \in M_2^2$ given by 
\[
X_1 = \left[\begin{matrix} 0 & h_1 \\ 0 & x_1 \end{matrix}\right] \quad, \quad X_2 = \left[\begin{matrix} 0 & h_2 \\ 0 & x_2 \end{matrix}\right] , 
\]
or in other words 
\[
X = \left[\begin{matrix} 0 & h \\ 0 & x \end{matrix}\right]
\]
where $x = (x_1, x_2) \in \bD^2$ and $h = (h_1, h_2) \in \bC^2$ is small enough so that $X \in \fD_2$. 
The nc function $f$ applied to $X$ gives by nc difference-differential calculus (see \cite[Section 2.2]{KVV14}): 
\[
f(X) = \left[\begin{matrix} f(0) & \Delta f(0,x) [h] \\ 0 & f(x) \end{matrix}\right] = \left[\begin{matrix} f(0,0) & \Delta_1 f(0,x) h_1 + \Delta_2 f(0,x) h_2 \\ 0 & f(x_1,x_2) \end{matrix}\right] .
\]
On the other hand, plugging $X$ into the realization formula \eqref{eq:realizationX} we find that 
\[
f(X) = \left[\begin{matrix} 0 & & \frac{x_2 - 1}{x_1 + x_2 - 2}h_1 +  \frac{x_1 - 1}{x_1 + x_2 - 2}h_2  \\ & & \\ 0 & & \frac{x_1 + x_2  - 2x_1 x_2}{x_1 + x_2 - 2} \end{matrix}\right].
\]
Comparing the above expressions, one can read off that
\[
\Delta_1 f(0,x) = \frac{x_2 - 1}{x_1 + x_2 - 2},
\]
which is clearly unbounded on $\bD^2$. 
\end{example}

\begin{remark}
This example goes counter to the intuition acquired from the case $d=1$, where the difference-differential operator evaluated at scalar points is just 
\[
\Delta f(x,y) = \frac{f(x) - f(y)}{x-y},
\]
which is bounded for $x \in \bD$ when $y \in \bD$ is held fixed. 
It is worth noting that this behavior is related to the nc TT expansion, even though unboundedness occurs already at the scalar level. 
For, what we have seen is that in the nc TT expansion around the origin of order zero 
\[
f(x) = f(0) + \Delta_1 f(0,x) x_1 + \Delta_2 f(0,x) x_2, 
\]
the factors $\Delta_i f(0,x)$ in the remainder term may be unbounded. 
On the other hand, it is well known that if $f \in H^\infty(\bD^2)$, then there exist $g_1, g_2 \in H^\infty(\bD^2)$ so that
\[
f(x) = f(0) + g_1(x) x_1 + g_2(x) x_2.
\]
Indeed, as explained in \cite[Section 6.6.1]{Rud08}, this is an easy version of Gleason's problem; take 
\[
g_1(x) = \frac{f(x_1, 0) - f(0,0)}{x_1} \quad, \quad g_1(x) = \frac{f(x_1, x_2) - f(x_1,0)}{x_2} .
\]
\end{remark}

\subsection{Some positive results}

The idea to use \eqref{eq:TT} in order to show that representations fibered over points $X \in \bD_Q$ are necessarily point evaluations comes directly from Davidson and Pitt's proof of \cite[Theorem 3.2]{DP98b}. 
Proposition 2.6 in \cite{DP98b} provides a Taylor expansion in operator-theoretic language suited for the noncommutative analytic Toeplitz algebra, which is equivalent\footnote{The identification of $H^\infty(\fB_d)$ with the nc analytic Toeplitz algebra is explained in \cite[Section 4]{SSS18}.} to \eqref{eq:TT} for functions in $H^\infty(\fB_d)$. 
In this subsection we draw inspiration from their proof to obtain positive results for a class of nc operator balls $\bD_Q$. 

Let $\cJ_N$ denote the weak-* closed ideal in $H^\infty(\fB_d)$ generated by the monomials of length $N$, i.e., $\{Z^w : |w|=N\}$. 

\begin{definition}\label{def:regular}
A nc operator ball $\bD_Q$ is said to be {\em right regular} if for every $f \in H^\infty(\bD_Q)$ and for every word $w$ the nc function $\Delta^w f (0,\ldots, 0,X)$ is in $H^\infty(\bD_Q)$ and if there exists a constant $M$ such that for every $N$ and for every $f \in \cJ_N$
\[
\left( \sup_{X \in \bD_Q}  \left\|\operatorname{row}(X^w)_{|w|=N} \right\| \right)  \cdot \left( \sup_{X \in \bD_Q} \left \| \operatorname{col}(\Delta^w f(0,\ldots, 0,X))_{|w|=N} \right \| \right) \leq M \|f\|. 
\]
\end{definition}

\begin{example}
The nc row ball $\fB_d$ is right regular. 
Indeed, if $f \in \cJ_N$ the TT expansion of order $N$ consists of the remainder term alone which we write as 
\[
f(Z) = \sum_{|w|=N} Z^w f_w(Z) = \operatorname{row}(Z^w)_{|w|=N} \cdot \operatorname{col}(f_w(Z))_{|w|=N}. 
\]
But $Z$ is a row isometry, and therefore 
\[
\|f\| = \|\operatorname{row}(Z^w)_{|w|=N} \cdot \operatorname{col}(f_w)_{|w|=N}\| = \|\operatorname{row}(Z^w)_{|w|=N} \| \| \operatorname{col}(f_w)_{|w|=N}\| = \|\operatorname{col}(f_w)_{|w|=N}\|. 
\]
In particular $\Delta^w f (0,\ldots, 0,\cdot) = f_w \in H^\infty(\fB_d)$ for all $w$ and $\fB_d$ is right regular. 
\end{example}

The above argument shows that $\bD_Q$ is regular whenever the nc operator ball is such that $Z$ is a row isometry (meaning that in any completely isometric embedding $A(\bD_Q)$ into a C*-algebra, the images of $Z_1, \ldots, Z_d$ are isometries with pairwise orthogonal ranges). 
However, this observation does not really lead to new examples. 

\begin{proposition}
If the coordinate functions $Z_1,\ldots, Z_d \in A(\bD_Q)$ form a row isometry, then $\bD_Q = \fB_d$. 
\end{proposition}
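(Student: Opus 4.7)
The plan is to show that the defining operator space $\cE = \spn\{Q_1, \ldots, Q_d\}$ is completely isometric to the row operator space $\bC^d_{\mathrm{row}}$, whence $\bD_Q = \fB_d$ follows immediately via the identification $\bD_Q(n) \leftrightarrow B_1(M_n(\cE))$ from equation (2.5) of the paper.

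The approach will be to compute the cb-norm of an arbitrary linear map $\phi \colon \cE \to M_n$ sending $Q_j \mapsto X_j$ (for given $X_1, \ldots, X_d \in M_n$) in two different ways. First, using the canonical identification $CB(\cE, M_n) \cong M_n(\cE^*)$ together with the fact that $\bD_Q(m)$ is precisely the open matrix unit ball of $\cE$ under the identification $Y \leftrightarrow Q(Y)$, one obtains
\[
\|\phi\|_{cb} = \sup_{m \in \bN,\, Y \in \bD_Q(m)} \left\|\sum_j Y_j \otimes X_j\right\| = \left\|\sum_j Z_j \otimes X_j\right\|_{M_n(A(\bD_Q))}.
\]
Second, since $\{Z_j\}$ forms a row isometry in a completely isometric representation of $A(\bD_Q)$, i.e., $Z_j^* Z_k = \delta_{jk} I$, a direct computation in this representation yields
\[
\left\|\sum_j Z_j \otimes X_j\right\|^2 = \left\|\sum_{j,k} Z_j^* Z_k \otimes X_j^* X_k\right\| = \left\|\sum_j X_j^* X_j\right\| = \|\mathrm{col}(X_1, \ldots, X_d)\|^2.
\]

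Combining the two expressions gives $\|\phi\|_{cb} = \|\mathrm{col}(X_1, \ldots, X_d)\|$ for every tuple $(X_j) \in M_n^d$ and every $n$. Via the duality $CB(\cE, M_n) \cong M_n(\cE^*)$, this identifies $\cE^*$ completely isometrically with the column operator space $\bC^d_{\mathrm{col}}$ (via $e_j^* \leftrightarrow e_j$, where $\{e_j^*\}$ is the dual basis to $\{Q_j\}$). Taking operator-space duals and invoking the standard fact $(\bC^d_{\mathrm{col}})^* \cong \bC^d_{\mathrm{row}}$, one concludes $\cE \cong \bC^d_{\mathrm{row}}$, and hence $\bD_Q = \fB_d$.

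I expect the main subtlety to be the first of the two computations above, which encodes the fact that the operator-space structure on $\cE^*$ is precisely dual to the matrix structure of $\bD_Q$. Once this identification is in hand, the row-isometry calculation is a direct computation from the defining relation $Z_j^* Z_k = \delta_{jk}I$, and the final passage from $\cE^* \cong \bC^d_{\mathrm{col}}$ to $\cE \cong \bC^d_{\mathrm{row}}$ is a standard fact from operator-space duality.
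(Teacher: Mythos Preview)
Your proof is correct and takes a genuinely different route from the paper's.

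The paper argues as follows: by Popescu's rigidity theorem for row isometries, $A(\bD_Q) \cong A(\fB_d)$ completely isometrically; then, invoking a classification result from the authors' earlier work \cite{SampatShalit25}, this isomorphism must be implemented by composition with a linear bijection $T \colon \bC^d \to \bC^d$ sending $\fB_d$ onto $\bD_Q$. The remainder of their argument is a hands-on analysis of $T$ (showing it is a contraction, decomposing it as $A \oplus U$ with $U$ unitary and $A$ a strict contraction, and deriving a contradiction unless $A$ is absent), concluding that $T$ is unitary and hence $\bD_Q = \fB_d$.

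Your approach bypasses both Popescu's theorem and the classification machinery entirely. You identify the operator space $\cE$ directly by computing $\|\phi\|_{cb}$ for an arbitrary $\phi \in CB(\cE, M_n)$ in two ways: once via the defining identification $\bD_Q(m) \leftrightarrow B_1(M_m(\cE))$, which yields $\|\sum_j Z_j \otimes X_j\|_{M_n(A(\bD_Q))}$, and once via the row-isometry relation $Z_j^*Z_k = \delta_{jk}I$ in a completely isometric representation, which yields the column norm of $(X_j)$. This pins down $\cE^* \cong \bC^d_{\mathrm{col}}$, and finite-dimensional operator-space duality gives $\cE \cong \bC^d_{\mathrm{row}}$. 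Your argument is more self-contained and more elementary, relying only on standard operator-space duality rather than on external structure theorems; the paper's argument, by contrast, ties the result into the classification framework developed elsewhere in the paper and its predecessor.
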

\begin{proof}
Popescu showed that all unital operator algebras generated by a row isometry of the same length are completely isometrically isomorphic \cite{Pop96}; 
thus, $A(\bD_Q)  \cong A(\fB_d)$. 
By \cite[Corollary 6.18]{SampatShalit25}, there is a bijective linear transformation $T \colon \bC^d \to \bC^d$ such that $T(\fB_d) = \bD_Q$ and such that the completely isometric isomorphism is given by
\[
A(\bD_Q) \ni f \mapsto f \circ T \in A(\fB_d).
\]
Now if we equip both copies of $\bC^d$ with the Euclidean norm, we see that $T$ must be a contraction, for otherwise
\[
\|Z\|_{A(\bD_Q)} = \sup_{X \in \bD_Q}\|X\| \geq \sup_{x \in T(\bB_d)} \|x\| > 1,
\]
and then $Z$ is not even a row contraction, not to mention a row isometry. 
It follows that $\bD_Q \subseteq \fB_d$, because every $Y \in \bD_Q$ has the form $Y = X (I_n \otimes T^t)$ for $X \in \fB_d$, which is a product of contractions, so $Y$ is a row contraction. 

Now, $T$ breaks up as the direct sum $T = A \oplus U$ where $U$ is a unitary and $A$ is a strict contraction, where either one of them might be nil (i.e., not present). 
If $A$ is nil, then we are done, for then $\bD_Q$ is a unitary image of $\fB_d$, whence $\bD_Q = \fB_d$ as required. 
If $A$ is not nil then, without loss of generality, we may assume there is an orthonormal basis $\{e_1, \ldots, e_d\}$ in which $T$ is represented by an upper triangular matrix, so that the $d$-th entry in $T(\sum \alpha_j e_j)$ is equal to $a\alpha_d e_d$ for $a \in \bD$. 
But then the coordinate function $Z_d$ on $\bD_Q$ satisfies $\|Z_d\| = 1$ (because it is an isometry), while
\[
\|Z_d \circ T \| = \sup_{X \in \fB_d} \|Z_d(X (I_n \otimes T^t))\| = \sup_{X \in \fB_d} |a| \|X_d\| = |a| < 1, 
\]
so $f \mapsto f \circ T$ is not an isometry, in contradiction to the first observation in the proof. 
\end{proof}

\begin{example}
Example \ref{ex:polydisk_blowup} shows that $\fD_2$ is not right regular, and in fact does not satisfy any imaginable weaker regularity property since $\Delta f(0,X)$ need not be levelwise bounded. 
In fact, $\fD_2$ is the maximal nc operator ball with first level equal to $\bD^2$, but since $\Delta f(0,X)$ blows up already on the first level, it follows that {\em no nc operator ball over $\bD^2$ is right regular}.
\end{example}

\begin{example}
We do not know whether the column unit ball 
\[
\fC_d = \left\{X \in \bM^d : \left\|\sum_{j=1}^d X_j^* X_j\right\| < 1\right\}, 
\]
is right regular, but it is easy to see that it is {\em left} regular (in an obvious sense). 
This can be seen by noting that by applying left difference-differential calculus instead of right difference-differential calculus, we can write the remainder term for $f \in \cJ_N$ as 
\[
f(Z) = \sum_{|w|=N} g_w(Z) Z^w = \operatorname{row}(g_w(Z))_{|w|=N} \cdot \operatorname{col}(Z^w)_{|w|=N}. 
\]
Since the tuple $Z = (Z_1, \ldots, Z_d)$ on $\fC_d$ is a column coisometry, we can argue as above and find $\|\operatorname{row}(g_w(Z))_{|w|=N}\| = \|f\|$, etc. 
\end{example}

Finally, we present a result in the positive direction for right regular nc operator balls, following the proof of \cite[Theorem 3.2]{DP98b}. 
We leave open for future research the question of which balls are regular. 

\begin{proposition}
If $\bD_Q$ is right regular, then, for every $X \in \bD_Q$, there is a unique completely contractive representation $\Phi$ of $H^\infty(\bD_Q)$ such that $\pi(\Phi) = X$, namely, $\Phi = \Phi_X$. 
\end{proposition}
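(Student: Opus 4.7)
The plan is to mimic Davidson and Pitts' proof of~\cite[Theorem 3.2]{DP98b}, with the nc Taylor--Taylor expansion \eqref{eq:TT} in place of the Fock-space Taylor expansion they employ. I would fix $X\in\bD_Q(n)$ and a completely contractive representation $\Phi$ of $H^\infty(\bD_Q)$ with $\Phi(Z)=X$, and aim to show $\Phi(f)=f(X)$ for every $f\in H^\infty(\bD_Q)$. For each $N\in\bN$, split $f=p_N+r_N$, where $p_N(Z):=\sum_{|w|<N}Z^w\Delta^w f(0,\ldots,0)\in\bC\langle Z\rangle$ is the polynomial of degree less than $N$ and $r_N(Z)=\sum_{|w|=N}Z^w g_w(Z)$ with $g_w:=\Delta^w f(0,\ldots,0,Z)$. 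Right regularity ensures $g_w\in H^\infty(\bD_Q)$ and, since $\Delta^w p_N=0$ for $|w|=N$, that $r_N\in\cJ_N$ with TT remainder column exactly $(g_w)_{|w|=N}$. Because $\Phi$ is a unital homomorphism with $\Phi(Z)=X$, one has $\Phi(p_N)=p_N(X)$, and therefore $\Phi(f)-f(X)=\Phi(r_N)-r_N(X)$ for every $N$.

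Next I would expand both terms as row--column products. Multiplicativity yields
\[
\Phi(r_N)=\operatorname{row}(X^w)_{|w|=N}\cdot\operatorname{col}(\Phi(g_w))_{|w|=N},\qquad r_N(X)=\operatorname{row}(X^w)_{|w|=N}\cdot\operatorname{col}(g_w(X))_{|w|=N},
\]
and complete contractivity of $\Phi$ applied to the column $\operatorname{col}(g_w)$, together with the pointwise bound $\|\operatorname{col}(g_w(X))\|\leq\|\operatorname{col}(g_w)\|_{H^\infty(\bD_Q)}$, gives
\[
\|\Phi(r_N)-r_N(X)\|\;\leq\;2\,\bigl\|\operatorname{row}(X^w)_{|w|=N}\bigr\|\cdot\bigl\|\operatorname{col}(g_w)_{|w|=N}\bigr\|_{H^\infty(\bD_Q)}.
\]

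To extract geometric decay, I would exploit that $X$ is interior: since $\|Q(X)\|<1$, choose any $s\in(\|Q(X)\|,1)$ and set $Y:=X/s$, so that $Y\in\bD_Q(n)$ and $X^w=s^{|w|}Y^w$ for every word $w$. Then $\|\operatorname{row}(X^w)_{|w|=N}\|\leq s^N\sup_{W\in\bD_Q}\|\operatorname{row}(W^w)_{|w|=N}\|$, and feeding this into the right regularity inequality (Definition~\ref{def:regular}) applied to $r_N\in\cJ_N$ produces $\|\Phi(r_N)-r_N(X)\|\leq 2Ms^N\|r_N\|$. The crude estimate $\|r_N\|\leq(N+1)\|f\|$, which follows from $\|r_N\|\leq\|f\|+\|p_N\|$ and the complete contractivity of each homogeneous projection $P_k$ in Proposition~\ref{prop:homogeneous.exp.properties}(1), then yields
\[
\|\Phi(f)-f(X)\|\;\leq\;2M(N+1)s^N\|f\|,
\]
and since the left-hand side is independent of $N$, letting $N\to\infty$ forces it to vanish, whence $\Phi=\Phi_X$.

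The proof is essentially formal once right regularity is in hand, so the main care required is bookkeeping: verifying that $\Delta^w r_N=\Delta^w f$ for $|w|=N$ and that $r_N$ is genuinely admissible for the regularity estimate of Definition~\ref{def:regular}. The substantive mathematical obstacle, as Section~\ref{sec:rep_ball} itself stresses, lies not in this proof but in Definition~\ref{def:regular} --- Example~\ref{ex:polydisk_blowup} shows that already $\fD_2$ fails the hypothesis, and identifying further right regular $\bD_Q$ beyond $\fB_d$ remains open.
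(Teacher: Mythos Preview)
Your proof is correct and follows the same core strategy as the paper: split $f$ into a polynomial part on which $\Phi$ and $\Phi_X$ agree, plus a remainder in $\cJ_N$, then use right regularity together with the fact that $X$ is interior (so $X=sY$ with $s<1$, $Y\in\bD_Q$) to extract geometric decay from the row of monomials.

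The execution differs in one respect worth noting. The paper uses Ces\`aro sums $\Sigma_N(f)$ rather than the Taylor truncation $p_N$, and a two-parameter scheme: fix $m$ with $r^m<\epsilon$, then let $N\to\infty$ so that the low-order discrepancy $\sum_{k<m}\tfrac{k}{N}f_k$ becomes small while the high-order part $g\in\cJ_m$ satisfies $\|g\|\leq 2\|f\|+\epsilon$ \emph{uniformly} in $N$. Your approach is more direct---a single parameter $N$ and the straight TT split $f=p_N+r_N$---but pays for this with the crude bound $\|r_N\|\leq (N+1)\|f\|$, which is then absorbed by $s^N$. The paper's route avoids any appeal to $Ns^N\to 0$ at the cost of the Ces\`aro machinery; yours is shorter and entirely self-contained once Proposition~\ref{prop:homogeneous.exp.properties}(1) is in hand. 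Either way the argument is, as you say, formal once right regularity is assumed.
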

\begin{proof}
Let $X \in \bD_Q$, so that $\|Q(X)\| = r < 1$, and suppose that $\Phi$ is completely contractive and that $\pi(\Phi) = X$. 
For $f \in H^\infty(\bD_Q)$ we let
\[
\Sigma_N(f) = \sum_{0\leq k<N} \left(1 - \frac{k}{N}\right) f_k
\]
be the Ces\`{a}ro sums which converge bounded pointwise to $f$ as $N \to \infty$. 
We shall show that
\[
\lim_{N \to \infty}\Phi(\Sigma_N(f)) = \Phi(f)
\]
Once we show this, we shall be done, because $\Phi$ and $\Phi_X$ agree on polynomials, so using the weak-* continuity of $\Phi_X$, we will obtain 
\[
\Phi(f) = \lim_{N \to \infty} \Phi(\Sigma_N(f)) = \lim_{N \to \infty} \Phi_X(\Sigma_N(f)) = \Phi_X(f). 
\]
To show that
\[
\lim_{N \to \infty} \Phi(\Sigma_N(f)) = \Phi(f),
\]
fix $\epsilon > 0$ and let $m$ be so that $r^m < \epsilon$. 
Now, for all $N$ sufficiently large, we have
\[
\left \|\sum_{k<m}\frac{k}{N} f_k \right\| < \epsilon. 
\]
We can therefore write 
\[
f - \Sigma_N(f) = g + \sum_{k<m}\frac{k}{N} f_k , 
\]
where $g \in \cJ_m$ satisfies 
\[
\|g \| \leq  \|f\| + \|\Sigma_N(f)\| + \left\|\sum_{k<m}\frac{k}{N} f_k\right\| < 2\|f\| + \epsilon 
\]
for all sufficiently large $N$. 
Therefore 
\[
\|\Phi(f) - \Phi(\Sigma_N(f))\| \leq \|\Phi(g)\| + \left\|\Phi\left( \sum_{k<m}\frac{k}{N} f_k \right) \right\| < \|\Phi(g)\| + \epsilon. 
\]
It remains to estimate $\|\Phi(g)\|$. 
Since $\|Q(X)\|=r$, we have that $X = rY$ for $Y \in \partial \bD_Q$. 
Now 
\[
g(Z) = \sum_{|w|=m} Z^w f_w(Z), 
\]
where we denote $f_w(Z) = \Delta^w f(0,\ldots, 0, Z)$, and by assumption of right regularity, all the $f_w$ are in $H^\infty(\bD_Q)$. 
Thus,
\[
\Phi(g) = \sum_{|w|=m} X^w \Phi(f_w) = r^m \sum_{|w|=m} Y^w \Phi(f_w).
\]
Since $\Phi$ is completely contractive and $\bD_Q$ is right regular, 
\begin{align*}
\|\Phi(g)\| 
&\leq r^m \left \| \operatorname{row}(Y^w)_{|w|=N} \right \| \left\| \operatorname{col}(\Phi(f_w))_{|w|=N} \right\| \\
& \leq r^m \left \| \operatorname{row}(Z^w)_{|w|=N} \right \| \left\| \operatorname{col}(f_w)_{|w|=N} \right\| \\
& \leq r^m M \|g\| .
\end{align*}
We conclude that 
\[
\|\Phi(g)\| \leq r^m M \|g\| \leq r^m M(2\|f\| + \epsilon) < \epsilon M(2\|f\| + \epsilon), 
\]
showing that
\[
\lim_{N \to \infty} \|\Phi(f) - \Phi(\Sigma_N(f))\| = 0,
\]
as required. 
\end{proof}

Finally, using \eqref{eqn:rep.on.V.in.terms.of.rep.on.D_Q}, we conclude the following. 
\begin{corollary}
Let $\bD_Q$ be a right regular nc operator ball and $\fV \subset \bD_Q$ a nc variety. 
Then for every $X \in \fV$, there is a unique completely contractive representation $\Phi$ of $H^\infty(\fV)$ such that $\pi(\Phi) = X$, namely, $\Phi = \Phi_X$. 
\end{corollary}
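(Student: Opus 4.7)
The plan is to reduce the corollary to the preceding proposition by lifting representations of $H^\infty(\fV)$ to representations of the ambient algebra $H^\infty(\bD_Q)$ via the restriction map $R$ from Theorem \ref{theorem:H(V).as.restr.algebra}. Fix $X \in \fV \subset \bD_Q$ and suppose $\Phi$ is a completely contractive representation of $H^\infty(\fV)$ with $\pi(\Phi) = X$.

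First, I would form the composition $\widetilde{\Phi} := \Phi \circ R \colon H^\infty(\bD_Q) \to M_n$. Since $R$ is a complete contraction (in fact, a complete quotient map to $H^\infty(\fV)$) and $\Phi$ is completely contractive, $\widetilde{\Phi}$ is a completely contractive representation of $H^\infty(\bD_Q)$. Moreover,
\[
\pi(\widetilde{\Phi}) = (\widetilde{\Phi}(Z_1), \ldots, \widetilde{\Phi}(Z_d)) = (\Phi(Z_1|_\fV), \ldots, \Phi(Z_d|_\fV)) = \pi(\Phi) = X.
\]
This is precisely the setup for the preceding proposition applied to the right regular ball $\bD_Q$, which forces $\widetilde{\Phi} = \Phi_X$ on $H^\infty(\bD_Q)$.

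To conclude that $\Phi = \Phi_X$ on $H^\infty(\fV)$, I would use that $R$ is surjective: every $f \in H^\infty(\fV)$ can be written as $f = F|_\fV = R(F)$ for some $F \in H^\infty(\bD_Q)$. Then
\[
\Phi(f) = \Phi(R(F)) = \widetilde{\Phi}(F) = \Phi_X(F) = F(X) = f(X) = \Phi_X(f),
\]
where the last $\Phi_X$ denotes point evaluation on $H^\infty(\fV)$. This gives uniqueness. Existence is immediate since $\Phi_X$ itself is a completely contractive representation of $H^\infty(\fV)$ lying over $X$.

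There is no real obstacle here; the corollary is a clean consequence of the proposition together with the surjectivity and complete contractivity of $R$, which was already observed to respect the passage between completely contractive representations of $H^\infty(\fV)$ and those of $H^\infty(\bD_Q)$ vanishing on $I(\fV)$, via \eqref{eqn:rep.on.V.in.terms.of.rep.on.D_Q}.
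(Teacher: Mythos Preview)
Your proof is correct and follows exactly the approach the paper takes: the paper simply invokes \eqref{eqn:rep.on.V.in.terms.of.rep.on.D_Q}, which encodes precisely the lifting $\Phi \mapsto \Phi \circ R$ you describe, and then the preceding proposition does the work. You have just unpacked that one-line citation in detail.
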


\subsection*{Acknowledgements} The authors would like to thank M. Hartz for suggesting the idea for Theorem \ref{theorem:predual.H^infty(D_Q)}. Thanks also to G. Knese for his help on obtaining Example \ref{ex:unbdd}. 
Finally, the authors would like to thank the anonymous referees for their thoughtful and helpful reports. 


\bibliographystyle{abbrv}
\bibliography{bibliography}

\end{document}